\documentclass[11pt,leqno]{article}
\usepackage{verbatim}
\usepackage[english]{babel}
\usepackage{enumitem}
\usepackage{tikz}
\usepackage{graphicx}
\usepackage{booktabs,tabularx}
\usepackage{verbatim}
\usepackage{etoolbox}
\usepackage{tkz-euclide}
\usepackage{geometry}
\usepackage{mathtools}
\usepackage{color}
\usepackage[title]{appendix}
\usepackage[all]{xy}
\usepackage{tikz-cd}
\usepackage{blindtext}
\usepackage[T1]{fontenc}
\usepackage{amsthm}
\usepackage{amsfonts}
\usepackage{txfonts}
\usepackage{palatino,amssymb,epsfig}
\usepackage{latexsym,epsf,epic,amscd}
\usepackage{mathrsfs}
\usepackage{graphicx}
\usepackage{caption}
\usepackage{subcaption}

\usepackage{float}

\usepackage{doi} 
\usepackage{hyperref}

\usepackage{appendix}
\usepackage{amsmath}
\usepackage{bookmark} 
\usepackage[nottoc]{tocbibind}
\hypersetup{
	colorlinks=true,
	linkcolor=black,
	citecolor=black,
	anchorcolor=black,
	urlcolor=black}
\allowdisplaybreaks[4]
\numberwithin{equation}{section}
\DeclareFontFamily{OMX}{yhex}{}
\DeclareFontShape{OMX}{yhex}{m}{n}{<->yhcmex10}{}
\DeclareSymbolFont{yhlargesymbols}{OMX}{yhex}{m}{n}
\DeclareMathAccent{\wideparen}{\mathord}{yhlargesymbols}{"F3}

\newcommand{\Addresses}{{
		\bigskip
		\footnotesize
	
 \par\nopagebreak
    \medskip
  \noindent Yu Sun, \href{yusun15185105160@163.com}{yusun15185105160@163.com}
  \newline\textit{School of Mathematics and Physics, Nanjing Institute of Technology, Nanjing, 211100, P.R. China.} 
		
      \noindent Lei Bao, \href {1025081901@njupt.edu.cn }{1025081901@njupt.edu.cn }
		\newline\textit{ School of Science, Nanjing University of Posts and Telecommunications,  Nanjing, 210003, P.R. China.}
	
        \noindent Wenhao Fu, \href{b22090210@njupt.edu.cn}{b22090210@njupt.edu.cn}
		\newline\textit{ School of Science, Nanjing University of Posts and Telecommunications,  Nanjing, 210003, P.R. China.}

     \noindent Zunwu He$^{*}$ (Corresponding author), \href{Hzwmath789@scut.educ.cn}{hzwmath789@scut.edu.cn }
		\newline\textit{  School of Mathematics, South China University of Technology, Guangzhou, 510641, P.R. China.}  }}

\date{}
\newtheorem{theorem}{Theorem}[section]
\newtheorem{lemma}[theorem]{Lemma}
\newtheorem{proposition}[theorem]{Proposition}
\newtheorem{corollary}[theorem]{Corollary}
\theoremstyle{definition}
\newtheorem{definition}[theorem]{Definition}
\newtheorem{remark}[theorem]{Remark}

\title{Existence, Rigidity, and Discrete Schwarz--Pick Lemma for Generalized Hyperbolic Circle Packings with Boundary and Discrete Gaussian Curvatures}
\author{Yu Sun, Lei Bao, Wenhao Fu, Zunwu He$^{*}$}
\date{}

\begin{document}
	
	\maketitle
	\begin{abstract}

This paper is concerned with generalized hyperbolic circle packings on compact bordered surfaces, endowed with finite polygonal cellular decompositions.
We investigate the problem of realizing generalized hyperbolic circle packings with prescribed geodesic curvatures at boundary vertices, prescribed total geodesic curvatures at interior vertices, and prescribed discrete Gaussian curvatures at the centers of dual circles. We give a necessary and sufficient condition for the existence of such generalized hyperbolic circle packings and show their uniqueness. We also establish a discrete Schwarz--Pick lemma in this setting, including comparison results for vertex curvatures, generalized circle arc lengths, distances, and areas, together with the corresponding rigidity statements.

	\end{abstract}

	\section{Introduction}
	\label{sec:intro}

\subsection{Background}
\label{subsec:background}

Discrete conformal structures on polyhedral surfaces are often regarded as discrete analogues of conformal structures on smooth surfaces, and circle packings play a central role in this setting. Thurston \cite{thurstongeometry} used circle packings on triangulated closed surfaces to study hyperbolic structures on \(3\)-manifolds, revealing a deep relation between circle packings and conformal geometry. For a general introduction to this subject, see also Stephenson \cite{stephenson2005introduction}.

A basic tool in the study of circle packings and circle patterns is the variational principle. In 1991, Colin de Verdi\`ere \cite{de1991principe} introduced a variational approach to circle packings, which has since become a standard method for proving existence and rigidity. Important developments in this direction were made by Bobenko and Springborn \cite{bobenko2004variational}, Leibon \cite{leibon2002characterizing}, Rivin \cite{rivin1994euclidean}, and Luo \cite{Luo12,Luo13,Luo}.

In spherical background geometry, however, the known functionals were non-convex, which made the variational method difficult to apply. This difficulty was recently overcome by Nie \cite{nie2024circle}, who introduced a convex functional involving total geodesic curvatures and proved existence and rigidity for circle patterns with prescribed total geodesic curvatures at vertices. In hyperbolic background geometry, the first author and her collaborators \cite{ba2023circle} studied generalized circle packings with conical singularities on triangulated compact surfaces. Their setting includes circles, horocycles, and hypercycles, with prescribed total geodesic curvatures at vertices.

For surfaces with boundary, the authors in \cite{HLLY} proved existence and rigidity for generalized circle packings with prescribed geodesic curvatures on boundary vertices and prescribed total geodesic curvatures on interior vertices. More recently, the authors in \cite{hugeneralized} extended this result from triangulations to finite polygonal cellular decompositions by using the Perron method. Building on \cite{ba2023circle,hugeneralized, HLLY}, the present paper studies generalized circle packings on surfaces with finite polygonal cellular decompositions and establishes existence and rigidity for generalized circle packing metrics with prescribed geodesic curvatures on boundary vertices, prescribed total geodesic curvatures on interior vertices, and prescribed conical angles, equivalently, prescribed discrete Gaussian curvatures, at the centers of dual circles.

The Schwarz--Pick phenomenon also has a discrete counterpart in circle packing theory. In the classical planar case, Beardon and Stephenson \cite{beardon1991schwarz} established a discrete Schwarz--Pick lemma for circle packings. Later, this comparison principle was extended to generalized circle packings on triangulated surfaces. In the present paper, we further extend it to the setting considered above, namely, generalized circle packings on surfaces with boundary and finite polygonal cellular decompositions, with prescribed discrete Gaussian curvatures at the centers of dual circles.

	\subsection{Set up}
	\label{subsec:setup}

	\subsubsection{Finite polygonal cellular decomposition}
\label{subsubsec:finite_polygonal_cellular_decomposition}

Let \(S_{g,n}\) be a surface of genus \(g\) with \(n\) boundary components. More precisely, \(S_{g,n}\) is obtained by removing \(n\) pairwise disjoint open disks from an oriented compact surface of genus \(g \ge 0\) without boundary.

Consider an embedding \(\eta:(V_{g,n},E_{g,n})\to S_{g,n}\) of a graph \((V_{g,n},E_{g,n})\) into \(S_{g,n}\). We assume that:
\begin{itemize}
    \item each boundary component of \(S_{g,n}\) contains at least one vertex of \(V_{g,n}\);
    \item each boundary component of \(S_{g,n}\) is a union of edges in \(E_{g,n}\).
\end{itemize}

For simplicity, we identify \(V_{g,n}\) with \(\eta(V_{g,n})\) and \(E_{g,n}\) with \(\eta(E_{g,n})\). When no confusion arises, we omit the subscripts \(g\) and \(n\).

The faces of the embedded graph are the connected components of \(S_{g,n}\setminus (V_{g,n}\cup E_{g,n})\). We denote the set of all faces by \(F_{g,n}\).

The embedding is called a \emph{closed \(2\)-cell embedding} if:
\begin{itemize}
    \item[(I)] the closure of each face is homeomorphic to a closed disk;
    \item[(II)] the boundary of each face is a simple closed curve consisting of finitely many edges.
\end{itemize}

Let \(\Sigma_{g,n}\) be the cellular decomposition of \(S_{g,n}\) induced by such a closed \(2\)-cell embedding, whose \(1\)-skeleton is \((V_{g,n},E_{g,n})\). Then \(V_{g,n}\), \(E_{g,n}\), and \(F_{g,n}\) are the sets of \(0\)-cells, \(1\)-cells, and \(2\)-cells, respectively.

\begin{definition}
\label{def:finite_polygonal_cellular_decomposition}
(Finite Polygonal Cellular Decomposition)
A finite cellular decomposition is called a finite polygonal cellular decomposition if:
\begin{itemize}
    \item[(I)] every \(2\)-cell is a polygon;
    \item[(II)] every \(0\)-cell is incident to at least three \(1\)-cells;
    \item[(III)] the \(1\)-skeleton is a simple graph, that is, it has neither loops nor multiple edges.
\end{itemize}
\end{definition}

\begin{remark}
\label{rem:finite_polygonal_cellular_decomposition_symbols}
We use the following notation.

For a face \(P\in F_{g,n}\), let \(V(P)\) be the set of vertices of \(P\), and let \(N(P)\) be the number of vertices of \(P\).

For a subset \(W\subset V_{g,n}\), define
\[
F_W=\{P\in F_{g,n}\mid V(P)\cap W\neq \emptyset\}.
\]
For \(P\in F_W\), define
\[
N(P,W)=\#(V(P)\cap W).
\]

We write \(V^\circ\) for the set of interior vertices and \(V^\partial\) for the set of boundary vertices. Thus \(V=V^\circ\cup V^\partial\).
\end{remark}

\subsubsection{Generalized circle packings on surfaces}
\label{subsubsec:generalized_circles}

This subsection uses some basic facts from hyperbolic geometry. In the Poincar\'e disk model \(\mathbb H^2\), a generalized circle may be a circle, a horocycle, or a hypercycle. The center of a horocycle is its unique ideal point, while the center of a hypercycle is the geodesic joining its two ideal endpoints; see Figure~\ref{f1}.

\begin{figure}[h]
    \centering
    \includegraphics[width=0.4\textwidth]{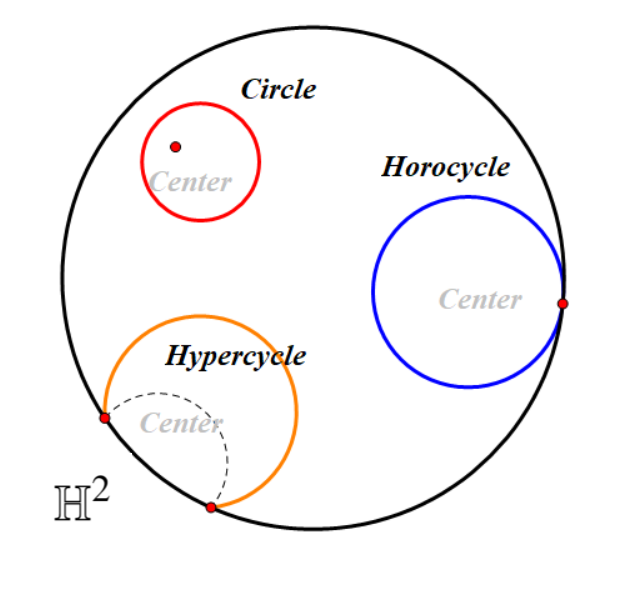}
    \caption{A circle, a hypercycle, and a horocycle in \(\mathbb H^2\).}
    \label{f1}
\end{figure}

For a generalized circle, the radius is the distance from a point on the circle to its center, and this distance is constant. A generalized circle is determined by its constant geodesic curvature \(k\). The relation between the radius \(r\) and \(k\) is
\[
r(k)=
\begin{cases}
\operatorname{arctanh} k, & 0<k<1,\\
+\infty, & k=1,\\
\operatorname{arccoth} k, & k>1.
\end{cases}
\]
Here \(k<1\) corresponds to a hypercycle, \(k=1\) to a horocycle, and \(k>1\) to a circle.

\begin{figure}[ht]
    \centering
    \includegraphics[width=0.3\textwidth]{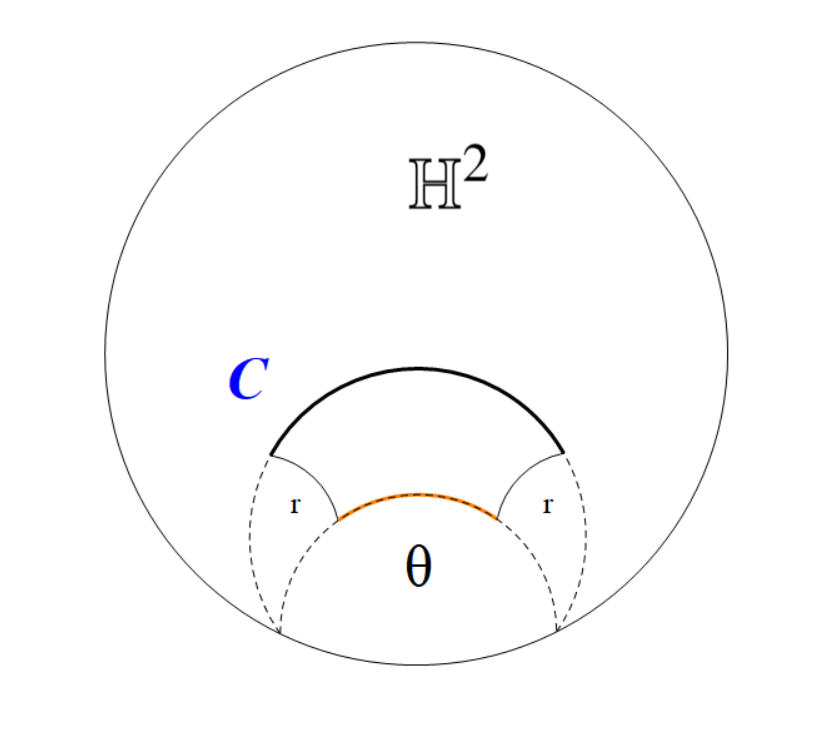}
    \captionof{figure}{The interior angle \(\theta\) of an arc \(C\) of a hypercycle.}
    \label{f2}
\end{figure}

For a circle or a horocycle, the interior angle \(\theta\) of an arc is the angle between the two shortest geodesics joining the endpoints of the arc to the center. For a hypercycle, \(\theta\) is defined as the hyperbolic length of the projection of the arc onto its geodesic axis \(\gamma\); see Figure~\ref{f2}. Table~\ref{t1} lists the relations among the radius \(r\), the geodesic curvature \(k\), the interior angle \(\theta\), and the arc length.

\begin{table}[H]
\centering
\begin{tabular}{|c|c|c|c|}
\toprule
 & \textbf{Radius} & \textbf{Geodesic curvature} & \textbf{Arc length} \\
\midrule
Circle & \(0<r<\infty\) & \(k=\coth r\) & \(l=\theta \sinh r\) \\
Horocycle & \(r=\infty\) & \(k=1\) & / \\
Hypercycle & \(0<r<\infty\) & \(k=\tanh r\) & \(l=\theta \cosh r\) \\
\bottomrule
\end{tabular}
\caption{Relations among the radius, the geodesic curvature, and the arc length.}
\label{t1}
\end{table}

According to Lemma 1.1 in \cite{hu2025combinatorial}, we have the following result for a single polygon.

\begin{lemma}
\label{lem:generalized_circle_packing_polygon}
(Generalized circle packing on a polygon)
Let \(P\) be an abstract polygon with vertices \(v_1,\dots,v_{N(P)}\) arranged counterclockwise. Given \((k_1,\dots,k_{N(P)})\in \mathbb R_{>0}^{N(P)}\) on \(V(P)\) and a number \(Y_P\) satisfying \((2-N(P))\pi<Y_P<2\pi\), there exists a geometric pattern \(\mathcal P\) formed by \(N(P)\) generalized circles \(C_1,\dots,C_{N(P)}\) in \(\mathbb H^2\) such that:
\begin{itemize}
    \item[(I)] the geodesic curvature of \(C_i\) is \(k_i\) for \(i=1,\dots,N(P)\);
    \item[(II)] the circles \(C_i\) and \(C_{i+1}\) are tangent, where \(C_{N(P)+1}:=C_1\);
    \item[(III)] there exists a hyperbolic circle \(C_P\) with conical angle \(\alpha_P=2\pi-Y_P\) at its center, such that \(C_P\) is perpendicular to each \(C_i\) and passes through all tangent points of \(C_1,\dots,C_{N(P)}\).
\end{itemize}
We call \(C_P\) the dual circle of \(P\). If \(k_P\) denotes the geodesic curvature of \(C_P\), then \(k_P\) is a \(C^1\)-function of \((k_1,\dots,k_{N(P)})\).
\end{lemma}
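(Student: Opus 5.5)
Fix the unknown radius \(\rho>0\) of the dual circle \(C_P\) and place its center \(O\) at the origin of the Poincar\'e disk with a cone angle \(\alpha_P=2\pi-Y_P\) there. The hypothesis \((2-N(P))\pi<Y_P<2\pi\) gives \(0<\alpha_P<N(P)\pi\), which is the range we will need. The plan is to decompose the cone at \(O\) into \(N(P)\) "kite" pieces, one per circle \(C_i\), glued along geodesic rays from \(O\) to the tangent points \(t_i=C_i\cap C_{i+1}\).

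First we describe a single kite. Suppose \(C_i\) has curvature \(k_i\) and is orthogonal to the circle of radius \(\rho\) about \(O\). Then \(C_i\) meets that circle in two points \(t_{i-1},t_i\), and these are symmetric about the geodesic from \(O\) to the center of \(C_i\) (or, in the hypercycle case, the perpendicular from \(O\) to the axis). The angle \(\beta_i=\angle t_{i-1}Ot_i\) is then determined by \((k_i,\rho)\). A hyperbolic right-angle computation in the quadrilateral bounded by \(O\), the two tangent points and the centre of \(C_i\) should give an explicit formula. For circles it reads \(\tan(\beta_i/2)=\tanh r_i/\sinh\rho\) with \(k_i=\coth r_i\), and there are analogous formulas for horocycles and hypercycles. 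All three cases combine into one expression,
\[
\tan\frac{\beta_i}{2}=\frac{1}{k_i\sinh\rho}.
\]
Orthogonality at \(t_i\) of \(C_i\) and \(C_P\) forces the tangent lines of \(C_i\) and \(C_{i+1}\) at \(t_i\) both to be the radial geodesic \(Ot_i\). Hence adjacent kites glue with \(C_i,C_{i+1}\) tangent at \(t_i\), and conditions (I)--(III) reduce to one equation for \(\rho\):
\[
\Phi(\rho):=\sum_{i=1}^{N(P)} 2\arctan\frac{1}{k_i\sinh\rho}=\alpha_P .
\]

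Next we solve this equation. \(\Phi\) is continuous and strictly decreasing on \((0,\infty)\). As \(\rho\to0^+\), \(\Phi(\rho)\to N(P)\pi\), and as \(\rho\to\infty\), \(\Phi(\rho)\to0\). Since \(0<\alpha_P<N(P)\pi\), the intermediate value theorem gives a unique \(\rho\). Gluing the kites cyclically around the cone point then produces the pattern \(\mathcal P\). If \(\alpha_P\neq2\pi\) this happens in a cone surface rather than in \(\mathbb H^2\), but every kite and every tangency lies in a hyperbolic region, which matches the "conical angle at the center" phrasing. The dual curvature is \(k_P=\coth\rho\). Regularity comes from the implicit function theorem applied to \(G(\rho,k)=\Phi(\rho;k)-\alpha_P\). \(G\) is smooth in \((\rho,k_1,\dots,k_{N(P)})\), and \(\partial_\rho G<0\) because each term has derivative \(-2k_i\cosh\rho/(k_i^2\sinh^2\rho+1)<0\). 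So \(\rho\), and hence \(k_P\), is \(C^1\) (indeed smooth) in \((k_1,\dots,k_{N(P)})\).

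The main obstacle is the kite formula in the horocycle and hypercycle cases. There the "centre" is ideal or a geodesic, so one has to redo the trigonometry in a Lambert or trirectangular quadrilateral. One must also check that the arcs of \(C_i\) between \(t_{i-1}\) and \(t_i\) do not overlap the neighbouring kites. I would handle the trigonometry by working in the hyperboloid model, characterizing \(C_i\) as \(\{x:\langle x,n_i\rangle=c\}\) with \(\langle n_i,n_i\rangle\) of sign determined by \(k_i\). Orthogonality with the circle about \(O\) then becomes a single linear condition, which yields the uniform formula \(\tan(\beta_i/2)=1/(k_i\sinh\rho)\) directly.
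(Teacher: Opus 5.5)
The paper gives no proof of this lemma; it quotes it from Lemma~1.1 of \cite{hu2025combinatorial}. So there is no in-paper argument to compare with, and your sector-gluing argument stands on its own as a complete and correct proof.

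\textbf{The angle formula.} Your uniform formula is right, and the hyperboloid computation you sketch goes through exactly.
\begin{itemize}
\item[(i)] Write $C_i=\{x:\langle x,m_i\rangle=-1\}$ with $\langle m_i,m_i\rangle=k_i^{-2}-1$, and $C_P=\{x:\langle x,m_P\rangle=-1\}$ with $m_P=e_0/\cosh\rho$.
\item[(ii)] At a common point $x$, projecting $m_i$ and $m_P$ to $T_x\mathbb H^2$ gives the normals $m_i-x$ and $m_P-x$. So orthogonality is $\langle m_i,m_P\rangle=-1$, i.e.\ $(m_i)_0=\cosh\rho$.
\item[(iii)] The spatial part of $m_i$ then has length $\sqrt{\sinh^2\rho+k_i^{-2}}$.
\item[(iv)] The intersection points $(\cosh\rho,\sinh\rho\,u)$ satisfy $\cos(\beta_i/2)=\sinh\rho/\sqrt{\sinh^2\rho+k_i^{-2}}$. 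This is $\tan(\beta_i/2)=1/(k_i\sinh\rho)=\sqrt{k_P^2-1}/k_i$, as you claimed.
\end{itemize}

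\textbf{The overlap issue.} The issue you flag is harmless.
\begin{itemize}
\item[(i)] The geodesics through $O$ and $t_{i-1}$, and through $O$ and $t_i$, are tangent to $C_i$. Hence they are supporting lines of the convex region $C_i$ bounds: a disk, a horodisk, or the axis side of the hypercycle.
\item[(ii)] Therefore that region, the whole curve $C_i$, and your kite all lie in the closed sector of angle $\beta_i<\pi$.
\item[(iii)] Each such line meets $C_i$ only at its tangency point.
\end{itemize}
After gluing, $C_i\cap C_{i+1}=\{t_i\}$, and non-adjacent circles are disjoint.

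\textbf{What your route gives beyond the statement.} It gives uniqueness of $\rho$, hence of the pattern up to isometry, and smooth dependence on $(k_1,\dots,k_{N(P)})$. Since $\partial\Phi/\partial k_i<0$ and $\partial\Phi/\partial\rho<0$, it also gives $\partial\rho/\partial k_i<0$. Thus $k_P=\coth\rho$ is strictly increasing in each $k_i$. That is precisely the monotonicity the paper later imports from \cite{hu2025hyperbolic} in the proof of Theorem~\ref{thm:schwarz_pick_II}.
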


The pattern \(\mathcal P\) is called a generalized circle packing on \(P\) with geodesic curvatures \((k_1,\dots,k_{N(P)})\) and conical angle \(\alpha_P=2\pi-Y_P\). The number \(Y_P\) is called the discrete Gaussian curvature at the center of the dual circle \(C_P\).

\begin{definition}
\label{def:generalized_circle_packing}
(Generalized circle packing metric)
Let \(S_{g,n}\) be a surface with polygonal decomposition \(\Sigma_{g,n}=(V,E,F)\). For each face \(P\in F\), choose a discrete Gaussian curvature \(Y_P\in ((2-N(P))\pi,2\pi)\). Let \(k\in \mathbb R_{>0}^{|V|}\) be the geodesic curvatures on \(V\), where \(k_v\) is the geodesic curvature at \(v\in V\). We construct a new surface \(\widetilde S_{g,n}\) as follows:
\begin{itemize}
    \item for each face \(P\in F\) with vertices \(v_1,\dots,v_{N(P)}\), construct a generalized conical polygon \(\widehat P\) with conical angle \(\alpha_P=2\pi-Y_P\) and vertex geodesic curvatures \((k_{v_1},\dots,k_{v_{N(P)}})\);
    \item glue these generalized conical polygons along the corresponding edges.
\end{itemize}
The resulting surface \(\widetilde S_{g,n}\) carries a hyperbolic conical metric, called a generalized circle packing metric.
\end{definition}

\begin{definition}[Total geodesic curvature]
\label{def:total_geodesic_curvature}
Let \(C_i\) be the generalized circle centered at a vertex. Its total geodesic curvature is defined by
\[
L_i=\int_{C_i} k_i\,ds,
\]
where \(k_i\) is the geodesic curvature of \(C_i\), and \(ds\) is the arc-length element along \(C_i\).

If \(C_i\) is only an arc contained in a face \(P\), let \(l_{i,P}\) be the length of this arc. Then
\[
L_{i,P}=\int_{C_i\cap P} k_i\,ds=l_{i,P}k_i.
\]
\end{definition}

\subsection{Main results}
\label{subsec:main_results}

\subsubsection{Boundary value problem}
\label{subsubsec:bvp}

We study generalized circle packings on \(S_{g,n}\) with prescribed boundary geodesic curvatures, prescribed interior total geodesic curvatures, and prescribed discrete Gaussian curvatures on dual circles.

Our first main result gives the existence and uniqueness of such metrics.

\begin{theorem}
\label{thm:existence_uniqueness_gcp}
Let \(S_{g,n}\) be a surface of genus \(g\) with \(n\) boundary components, and let \(\Sigma_{g,n}=(V,E,F)\) be a finite polygonal cellular decomposition of \(S_{g,n}\). Then there exists a surface \(\widetilde S_{g,n}\) with a generalized circle packing metric having interior total geodesic curvatures \(L_1,\dots,L_{|V^\circ|}\), boundary geodesic curvatures \(k_1,\dots,k_{|V^\partial|}\), and discrete Gaussian curvatures \(Y_1,\dots,Y_{|F|}\) at the centers of the dual circles if and only if \(Y_P\in ((2-N(P))\pi,2\pi)\) for every \(P\in F\) and \((L_1,\dots,L_{|V^\circ|})^T\in \mathcal L\), where
\[
\mathcal L=
\left\{
(L_1,\dots,L_{|V^\circ|})^T\in \mathbb R_{>0}^{|V^\circ|}
\;\middle|\;
\sum_{v\in W}L_v
<
\sum_{P\in F_W}
\pi \min \left\{N(P,W),\,N(P)-2+\frac{Y_P}{\pi}\right\},
\ \forall\, W\subset V^\circ
\right\}.
\]
Moreover, the generalized circle packing metric is unique if it exists.
\end{theorem}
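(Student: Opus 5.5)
We plan to follow the variational approach of \cite{ba2023circle,HLLY}, adapted to polygons through Lemma~\ref{lem:generalized_circle_packing_polygon}. First we fix the boundary curvatures \(k_v\), \(v\in V^\partial\), and the discrete Gaussian curvatures \(Y_P\in((2-N(P))\pi,2\pi)\). We then regard the interior curvatures \((k_v)_{v\in V^\circ}\in\mathbb R_{>0}^{|V^\circ|}\) as the unknowns, and change variables by \(s_v=\ln k_v\) (or another monotone reparametrization suited to the hyperbolic case).

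For each face \(P\), Lemma~\ref{lem:generalized_circle_packing_polygon} produces the packing, so the arc contributions \(L_{v,P}=k_v l_{v,P}\) are \(C^1\) functions of \((k_u)_{u\in V(P)}\). On the conical polygon \(\widehat P\), which we decompose into \(N(P)\) generalized quadrilaterals or triangles with the dual-circle center, we will check that the Jacobian \(\partial L_{v,P}/\partial s_u\) is symmetric. This yields a closed \(1\)-form and hence a local potential \(\mathcal E_P(s)\). We expect \(\mathcal E_P\) to be strictly convex in the variables of \(V(P)\), with a degeneracy only along directions that the gluing removes. Summing over faces gives
\[
\mathcal E(s)=\sum_{P\in F}\mathcal E_P(s)-\sum_{v\in V^\circ}L_v s_v .
\]
Its critical points are exactly the packing metrics with the prescribed data. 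Strict convexity of \(\mathcal E\) in the interior variables comes from diagonal dominance of the Hessian, using \(\partial L_{v,P}/\partial s_v>0\) and \(\partial L_{v,P}/\partial s_u<0\) for \(u\neq v\). This strict convexity gives \textbf{uniqueness}.

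For \textbf{necessity}, we study the angle-type quantity \(L_{v,P}\) as the curvatures degenerate. By Gauss--Bonnet on \(\widehat P\), the total \(\sum_{v\in V(P)} L_{v,P}\) equals the area of the inner region plus boundary terms. In our normalization we expect this to give
\[
\sum_{v\in V(P)}L_{v,P}<\pi\,(N(P)-2)+Y_P .
\]
The main reason is that the angles at the tangency points are right angles, because the dual circle is orthogonal to every \(C_i\). We also expect the geometric bound \(L_{v,P}<\pi\) for each arc. Summing over \(v\in W\) and \(P\in F_W\) then yields the inequality defining \(\mathcal L\), strictly.

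For \textbf{existence}, we show that the image of \(\nabla\) of the convex part, namely the map \(s\mapsto (L_v(s))_{v\in V^\circ}\), is open and closed in \(\mathcal L\). Openness follows from the invariance of domain together with the injectivity coming from convexity. Closedness is the main obstacle. We take a sequence \(s^{(m)}\) whose images converge to a point of \(\mathcal L\) while \(s^{(m)}\) leaves every compact set. We pass to a subsequence and let \(W\) be the set of vertices with \(k_v\to 0\), so that the hypercycle radius degenerates. We must compute the limits of \(L_{v,P}\) along this subsequence, and we first try a case analysis on each face. If \(N(P,W)\) is small, the arcs at \(W\) each tend to \(\pi\). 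Otherwise the whole polygon opens up and their sum tends to \(\pi(N(P)-2)+Y_P\). Either way, \(\sum_{v\in W}L_v\) tends to the right-hand side of the defining inequality, which contradicts the assumption that the limit lies in \(\mathcal L\). The case \(k_v\to\infty\) forces \(L_v\to 0\), which contradicts positivity. Since \(\mathcal L\) is connected (it is convex) and the image is nonempty, the image is all of \(\mathcal L\). Alternatively, if the asymptotics are hard to control uniformly, we fall back on the Perron method of \cite{hugeneralized}, using sub- and supersolutions built from the same limiting estimates.
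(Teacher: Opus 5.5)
Your architecture matches the paper's. You use variables $s_v=\ln k_v$ on $V^\circ$ with boundary curvatures frozen, and local potentials from the closed form $\sum_i L_{i,P}\,ds_i$. You get strict convexity by diagonal dominance, injectivity of $s\mapsto(L_v(s))_{v\in V^\circ}$ plus invariance of domain, and closedness by a boundary-behaviour argument. Your critical-point formulation with $-\sum_v L_v s_v$ is equivalent to the paper's gradient-map formulation.

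The genuine problem is the closedness step: you have interchanged the two degenerations.
\begin{itemize}
\item When $k_v\to 0$, i.e.\ $s_v\to-\infty$, the hypercycle flattens onto its axis. Its contribution $L_{v,P}=k_v l_{v,P}=\theta_v\sinh r_v$ tends to $0$; this is Lemma~\ref{lem:limit_total_curvature}(I).
\item When $k_v\to\infty$, i.e.\ $s_v\to+\infty$, the circle shrinks to a point. Then $L_{v,P}=\theta_v\cosh r_v$ tends to the limiting vertex angle of the collapsing kite, which is $\pi$ when the set $I$ of shrinking vertices of $P$ has $|I|<N(P)-2+Y_P/\pi$. When $|I|\ge N(P)-2+Y_P/\pi$, the face collapses and Gauss--Bonnet forces $\sum_{v\in I}L_{v,P}\to(N(P)-2)\pi+Y_P$; this is Lemma~\ref{lem:limit_total_curvature}(II).
\end{itemize}

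As written, both of your cases assert false limits, so the contradictions you derive are not valid. Suppose one interior $k_v\to\infty$ while all other curvatures stay bounded. Then $L_v$ does not tend to $0$; it tends to $\sum_{P\in F_{\{v\}}}\pi\min\{1,N(P)-2+Y_P/\pi\}>0$. So positivity gives nothing, and the contradiction must come from the defining inequality of $\mathcal L$ with $W=\{v\}$.

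The correct bookkeeping, as in the paper, uses two sets:
\begin{itemize}
\item $W_+=\{v\in V^\circ: s_v\to+\infty\}$, for which $\sum_{v\in W_+}L_v$ converges to the right-hand side of the $\mathcal L$-inequality with $W=W_+$;
\item $W_-=\{v\in V^\circ: s_v\to-\infty\}$, for which $L_v\to 0$ contradicts positivity of the limit point.
\end{itemize}

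A smaller omission concerns convexity. The sign pattern $\partial L_{v,P}/\partial s_v>0$, $\partial L_{u,P}/\partial s_v<0$ for $u\neq v$ does not by itself give diagonal dominance; you also need positive column sums. These come from Gauss--Bonnet, $\mathrm{Area}(\Omega_P)=N(P)\pi-\alpha_P-\sum_i L_{i,P}$ with $\alpha_P=2\pi-Y_P$ fixed, together with $\partial\,\mathrm{Area}(\Omega_P)/\partial s_v<0$. Together they give $\sum_{u\in V(P)}\partial L_{u,P}/\partial s_v>0$. The terms from frozen boundary vertices are off-diagonal and hence $\le 0$, so discarding them only increases the sum. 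Summing over $P\in F_{\{v\}}$ then gives strict dominance of the global Hessian.

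Finally, there is no degenerate direction for the gluing to remove: with $\alpha_P$ fixed, the local Hessian is already positive definite.
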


\subsubsection{Discrete Schwarz--Pick lemma}
\label{subsubsec:schwarz_pick}

The classical Schwarz--Pick lemma says that a holomorphic self-map of the unit disk does not increase hyperbolic distance. In the discrete setting, Beardon and Stephenson proved an analogue for hyperbolic circle packings. Later, this was extended to generalized circle packings on triangulated surfaces. Here we consider the same question for surfaces with finite polygonal cellular decompositions.

Let \(\mathcal P^1\) and \(\mathcal P^2\) be two generalized circle packings on the same surface. Assume that they have the same prescribed interior total geodesic curvatures and the same discrete Gaussian curvatures on dual circles. If the boundary geodesic curvatures of \(\mathcal P^1\) are no greater than those of \(\mathcal P^2\), then one can compare the vertex curvatures, distances, areas, and arc lengths of the two packings.

These comparisons are stated in the next theorem and will be proved in Section~\ref{sec:schwarz_pick_lemma}.

\begin{theorem}[Discrete Schwarz--Pick lemma]
\label{thm:discrete_schwarz_pick}
For \(j=1,2\), let \(\hat k_i^j\) be the prescribed geodesic curvature on the boundary for \(v_i\in V^\partial\), and let \(\hat L_i\) be the prescribed total geodesic curvature at interior vertices \(v_i\in V^\circ\), where \((\hat L_1,\dots,\hat L_{|V^\circ|})^T\in \mathcal L\). Let \(\mathcal P^j\) be the generalized circle packing on \((S_{g,n},\Sigma_{g,n})\) realizing these data. Denote by \(k_i^j\) the geodesic curvature of \(\mathcal P^j\) at \(v_i\in V^\circ\), and set \(k_i^j=\hat k_i^j\) for \(v_i\in V^\partial\).

Assume that \(\hat k_i^1\le \hat k_i^2\) for all \(v_i\in V^\partial\). Then:
\begin{itemize}
    \item[(I)] \(k_i^1\le k_i^2\) for all \(v_i\in V\);
    \item[(II)] \(l_{i,P}^1\ge l_{i,P}^2\) for every face \(P\in F\) and every \(v_i\in V(P)\), where \(l_{i,P}^j\) is the length of the generalized circle arc centered at \(v_i\) in \(P\) for \(j=1,2\);
    \item[(III)] \(\rho^1(v_i,v_l)\ge \rho^2(v_i,v_l)\) for any \(v_i,v_l\in \widetilde V\), where \(\widetilde V=\{v_i\in V\mid k_i^1>1\}\), and \(\rho^1,\rho^2\) denote the distances defined by admissible arc chains associated with the polygonal cellular decomposition on the hyperbolic surfaces \(\widetilde S_{g,n}^1\) and \(\widetilde S_{g,n}^2\), respectively;
    \item[(IV)] \(\mathrm{Area}^1(\Omega_P)\ge \mathrm{Area}^2(\Omega_P)\) for every face \(P\in F\), where \(\mathrm{Area}^j(\Omega_P)\) is the hyperbolic area of the ideal region corresponding to \(P\) in \(\mathcal P^j\).
\end{itemize}
Moreover, if equality occurs in any one of the following cases:
\begin{itemize}
    \item[(I)] \(k_i^1=k_i^2\) at some interior vertex \(v_i\in V^\circ\);
    \item[(II)] \(l_{i,P}^1=l_{i,P}^2\) for some generalized circle arc centered at an interior vertex \(v_i\in V^\circ\);
    \item[(III)] \(\rho^1(v_i,v_l)=\rho^2(v_i,v_l)\) for some \(v_i,v_l\in \widetilde V\), with at least one of them in \(V^\circ\);
    \item[(IV)] \(\mathrm{Area}^1(\Omega_P)=\mathrm{Area}^2(\Omega_P)\) for some face \(P\in F\) containing an interior vertex,
\end{itemize}
then \(k_i^1=k_i^2\) for all \(v_i\in V\). In particular, \(\mathcal P^1=\mathcal P^2\).
\end{theorem}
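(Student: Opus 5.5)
The plan is to derive (I) from a maximum principle, and then obtain (II)--(IV) from monotonicity properties of a single generalized conical polygon $\widehat P$ in its vertex curvatures.

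\textbf{Step 1: local monotonicity.} Fix a face $P$ with fixed $Y_P$. Consider the arc length $l_{i,P}$ and the total geodesic curvature $L_{i,P}=k_i l_{i,P}$ as functions of $(k_{v})_{v\in V(P)}$; Lemma~\ref{lem:generalized_circle_packing_polygon} makes them $C^1$. I would verify the following sign conditions:
\[
\frac{\partial L_{i,P}}{\partial k_i}<0,\qquad \frac{\partial L_{i,P}}{\partial k_j}>0\ (j\neq i),
\]
together with $\partial l_{i,P}/\partial k_j\le 0$ for all $j\in V(P)$. To do this, I would decompose $\widehat P$ into $N(P)$ right-angled ``kites.'' Each kite has vertices at $v_i$, the two tangency points on $C_P$, and the center of $C_P$; when $C_i$ is a hypercycle or horocycle, the vertex $v_i$ is replaced by the appropriate generalized vertex. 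I would then use hyperbolic trigonometry for quadrilaterals with two right angles, as in \cite{ba2023circle,HLLY}. A key ingredient is the dependence of $k_P$ on $k$: increasing any $k_j$ shrinks the dual circle, that is, it increases $k_P$.

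\textbf{Step 2: part (I) by a maximum principle.} Suppose $W=\{v_i\in V^\circ: k_i^1>k_i^2\}$ is nonempty. On $\partial W$, and at all boundary vertices, we have $k^1\le k^2$. Using Step 1, I compare $\sum_{v\in W}\hat L_v$ computed for $\mathcal P^1$ and for $\mathcal P^2$. First interpolate on each face $P\in F_W$ from $k^2$ to $\max(k^1,k^2)$ only at the $W$-vertices, and then lower the non-$W$ vertices to $k^1$. The quantity $\sum_{v\in W\cap V(P)}L_{v,P}$ strictly decreases in the first move and does not increase in the second. This is the main obstacle: the mixed partials cancel only partially, so one needs the diagonal-dominance property
\[
\Bigl|\sum_{j\in W\cap V(P)}\partial_{k_j}L_{i,P}\Bigr|>0
\]
with the correct sign. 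I would try to obtain this as a sign of the row sums of the Jacobian, presumably exploiting the identity $\sum_i L_{i,P}+\text{(angle sum)}=$ const from Gauss--Bonnet on $\widehat P$. Summing over $P\in F_W$ then gives $\sum_W\hat L_v<\sum_W\hat L_v$, a contradiction. The strict equality statement comes from the same argument: apply a strong maximum principle to the set where $k^1=k^2$ at an interior vertex, and propagate along edges using the strict inequalities.

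\textbf{Step 3: parts (II)--(IV) and the rigidity cases.} Part (II) follows from (I) and the monotonicity of $l_{i,P}$ in all variables. Part (IV) follows because the area of $\Omega_P$ is, by Gauss--Bonnet, an explicit combination of the $L_{i,P}$, the angles, and $Y_P$. I would show it is decreasing in each $k_j$, and then apply (I). For (III), any admissible arc chain between vertices of $\widetilde V$ in $\widetilde S^1$ corresponds to a chain in $\widetilde S^2$. Its length is a sum of radii and arc pieces, each of which is monotone nonincreasing in $k$ by (I)--(II). Taking the infimum gives $\rho^1\ge\rho^2$.

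For the equality cases, equality at an interior vertex forces equality of some strictly monotone local quantity. By the strictness in Step 1, this forces $k_j^1=k_j^2$ at all vertices of the adjacent face. Propagating via the strong maximum principle of Step 2, we get $k^1=k^2$ everywhere. The uniqueness part of Theorem~\ref{thm:existence_uniqueness_gcp} then yields $\mathcal P^1=\mathcal P^2$.
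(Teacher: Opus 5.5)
Your overall route is viable, and it differs from the paper's in the proof of (I). However, Step~1 contains a sign error that carries through the rest of the proposal.

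\textbf{The sign error.} The correct local signs, used in Proposition~\ref{prop:potential_convex_local} and exactly what makes $\mathrm{Hess}\,\mathcal E$ positive definite, are
\[
\frac{\partial L_{i,P}}{\partial s_i}>0,\qquad \frac{\partial L_{i,P}}{\partial s_j}<0\quad (j\neq i).
\]
Passing from $s=\ln k$ to $k$ does not change signs, so these are the opposite of what you propose to verify. Your version cannot hold. With the other curvatures fixed, $L_{i,P}=k_i l_{i,P}>0$, and $L_{i,P}\to 0$ as $k_i\to 0$ by Lemma~\ref{lem:limit_total_curvature}(I). Hence $L_{i,P}$ cannot be decreasing in $k_i$.

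The error also makes the proposal internally inconsistent. With your signs, the first move of Step~2 would need $\sum_{i\in W\cap V(P)}\partial_{k_j}L_{i,P}<0$. For a face with $V(P)\subset W$ this is the full column sum. By Gauss--Bonnet, $\mathrm{Area}(\Omega_P)=N(P)\pi-\alpha_P-\sum_i L_{i,P}$, so a negative full column sum means the area increases in $k_j$. For (IV) you (correctly) want the area to decrease.

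Two smaller corrections belong here. The Gauss--Bonnet complement of $\sum_i L_{i,P}$ is $\mathrm{Area}(\Omega_P)$, not an angle sum. And the quantities you need are column sums $\sum_i \partial L_{i,P}/\partial s_j$. These coincide with row sums only because the Hessian in the $s$-variables is symmetric.

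\textbf{How the fix closes Step 2.} Once the signs are corrected, your ``main obstacle'' is closed by the same fact you plan to prove for (IV). For $j\in W\cap V(P)$,
\[
\sum_{i\in W\cap V(P)}\frac{\partial L_{i,P}}{\partial s_j}\;\ge\;\sum_{i\in V(P)}\frac{\partial L_{i,P}}{\partial s_j}\;=\;-\frac{\partial\,\mathrm{Area}(\Omega_P)}{\partial s_j}\;>\;0 .
\]
So the first move strictly \emph{increases} $\sum_{v\in W\cap V(P)}L_{v,P}$. The second move lowers non-$W$ vertices, where $\partial L_{i,P}/\partial s_j<0$, so it does not decrease that sum. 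Summing over $F_W$ gives $\sum_W \hat L_v>\sum_W\hat L_v$, the desired contradiction.

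\textbf{Comparison with the paper.} Your set-based comparison is a genuine alternative to Lemma~\ref{lem:maximum}. The paper instead takes the vertex $v_0$ maximizing $u=\ln(k^1/k^2)$. It applies the mean value theorem to $L_0$ along the segment from $s^2$ to $s^1$, and uses symmetry of the local Hessian to pass to column sums. It then bounds each face term below by $u_0$ times a positive column sum. Both arguments use the same local input: negative off-diagonal entries and positive column sums. The paper's pointwise version has one advantage: the identical computation with $u_0=0=\max u$ gives the rigidity directly. In your route that strong maximum principle still has to be carried out separately. Your treatment of (II)--(IV) matches the paper's.

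\textbf{A shared caveat.} In both your propagation and the paper's, the rigidity argument can only be restarted at interior vertices. It therefore tacitly needs every vertex to be linked to $v_0$ through faces that contain interior vertices. A face with all vertices on $\partial S_{g,n}$ that separates the interior vertices would decouple the problem, and the rigidity conclusion would then fail.
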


    \section{The potential function}
\label{sec:potential}

In this section and the next one, we prove Theorem~\ref{thm:existence_uniqueness_gcp}. By Lemma~\ref{lem:generalized_circle_packing_polygon} and Definition~\ref{def:generalized_circle_packing}, the desired generalized circle packing is determined once the curvature \(k\) is known. Thus the problem is reduced to recovering \(k\) from the prescribed data \(\hat L\) and \(\hat k\). To do this, we use a variational principle similar to that in \cite{hu2025hyperbolic}.

\subsection{Potential functions for local generalized hyperbolic circle packings}
\label{subsec:potential_functions_local_gcp}

Let \(P\) be a face, and let \(\widetilde P\) be the associated generalized conical polygon with conical angle \(\alpha_P=2\pi-Y_P\). Suppose that the geodesic curvatures at the vertices of \(P\) are \(k_1,\dots,k_{N(P)}\). By Lemma 2.1 in \cite{hu2025combinatorial}, if all \(k_1,\dots,k_{N(P)}\) are variable, then the \(1\)-form
\[
\omega_P=\sum_{i=1}^{N(P)} l_{i,P}\,dk_i
\]
is closed.

Set \(L_{i,P}=l_{i,P}k_i\) and \(s_i=\ln k_i\). Then the \(1\)-form
\[
\widetilde\omega_P=\sum_{i=1}^{N(P)} L_{i,P}\,ds_i
\]
is also closed. Hence, if all \(k_1,\dots,k_{N(P)}\) are variable, we define the local potential function by
\[
\mathcal E_P(s_1,\dots,s_{N(P)})=\int_0^{(s_1,\dots,s_{N(P)})}\widetilde\omega_P.
\]

Now let
\[
I_P^0=\{\,i\in \{1,\dots,N(P)\}\mid k_i \text{ is not fixed}\,\}.
\]
Write \(I_P^0=\{i_1,\dots,i_m\}\). For simplicity, we relabel the corresponding variables by
\[
s_a:=s_{i_a},
\qquad
L_{a,P}:=L_{i_a,P},
\qquad
a=1,\dots,m.
\]
If some of the \(k_i\) are fixed, then we define
\[
\mathcal E_P(s_1,\dots,s_m)=\int_0^{(s_1,\dots,s_m)}\widetilde\omega_P,
\qquad
\widetilde\omega_P=\sum_{a=1}^m L_{a,P}\,ds_a.
\]
Since \(dk_i=0\) for every fixed \(k_i\), this definition is well defined.

We now prove that these local potential functions are strictly convex.

\begin{proposition}
\label{prop:potential_convex_local}
The local potential functions \(\mathcal E_P\) are strictly convex.
\end{proposition}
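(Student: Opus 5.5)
Since \(\widetilde\omega_P\) is closed, \(\mathcal E_P\) is a well-defined \(C^2\) function whose Hessian in the free variables \(s_a\) is the Jacobian
\[
H_P=\left(\frac{\partial L_{a,P}}{\partial s_b}\right)_{a,b\in I_P^0}.
\]
This matrix is symmetric by closedness. It therefore suffices to show that \(H_P\) is positive definite for the full variable set \(\{1,\dots,N(P)\}\). The case where some \(k_i\) are fixed then follows at once, because the Hessian in the variables \(s_a\), \(a\in I_P^0\), is a principal submatrix of the full Hessian, and principal submatrices of a positive definite matrix are positive definite. The domain of each \(s_i\) is all of \(\mathbb R\), so it is convex, and strict convexity follows from positive definiteness of \(H_P\).

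To show that the full \(H_P\) is positive definite, I will use the geometric decomposition from Lemma~\ref{lem:generalized_circle_packing_polygon}. The dual circle \(C_P\) with cone point \(O\) cuts \(\widehat P\) into \(N(P)\) pieces, one for each vertex. Each piece is a generalized quadrilateral formed by \(O\), the two tangency points \(t_{i-1,i}\) and \(t_{i,i+1}\), and the vertex \(v_i\) (or its ideal point or axis). The angles at these tangency points are right angles, since \(C_P\perp C_i\). The arc \(l_{i,P}\) depends only on \(k_i\), on the dual circle radius \(r_P\), and on the half-angles \(\beta_{i-1,i},\beta_{i,i+1}\) at \(O\) subtended by the two tangency points. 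The closure condition is \(\sum_i 2\beta_{i,i+1}=\alpha_P\). Using the hyperbolic trigonometry of a right-angled kite (with explicit formulas in the three cases circle, horocycle, hypercycle), I will compute \(\partial L_{i,P}/\partial s_j\). These partial derivatives come from two sources: the direct dependence on \(k_i\), and the indirect dependence through \(r_P\), which by the lemma is a \(C^1\) function of \(k\).

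The expected structure is as follows. For \(j\neq i\) the entry \(\partial L_{i,P}/\partial s_j\) is negative when \(j\) is adjacent to \(i\) and has a definite sign otherwise. The key identity is a monotonicity statement: \(\sum_j \partial L_{i,P}/\partial s_j\), the derivative under the scaling \(k\mapsto e^t k\), is strictly positive. This should hold because increasing all curvatures shrinks every circle, and the total geodesic curvature \(l_{i,P}k_i\) then tends toward the angle quantity and increases. Granting these sign properties, \(H_P\) is symmetric and strictly diagonally dominant with nonpositive off-diagonal entries, so it is positive definite.

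The main obstacle is the off-diagonal entries coming from non-adjacent vertices. These arise only through \(r_P\), so they need not have a convenient sign. I would avoid computing them directly. Instead, I will change variables to \((k_1,\dots,k_{N(P)})\mapsto(\beta_{1,2},\dots,\beta_{N(P),1},\,r_P)\) restricted to the constraint \(\sum 2\beta=\alpha_P\). In these variables, each \(L_{i,P}\) depends locally only on \(k_i\) and the adjacent \(\beta\)'s. I will then write the quadratic form \(ds^T H_P\, ds\) as a sum of nonnegative local terms over the kites plus a term that controls \(dr_P\). That decomposition would give positive semidefiniteness. Definiteness then follows by showing that the form vanishes only when all \(ds_i=0\). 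This last step is where the restriction \(Y_P<2\pi\) (that is, \(\alpha_P>0\)) is used. The form can vanish only if every kite is unchanged, and unchanged kites together with \(\alpha_P>0\) force the zero variation.
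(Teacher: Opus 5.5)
Your reduction of the partially fixed case to the fully variable one is correct, and it is cleaner than the paper's. The Hessian in the free variables $s_a$, $a\in I_P^0$, is a principal submatrix of the full $N(P)\times N(P)$ Hessian, so it is positive definite once the full one is. (The paper instead re-runs a diagonal-dominance argument and in passing sets $\partial L_{i,P}/\partial s_a=0$ for fixed $i$. Those are off-diagonal entries of the full Hessian and are actually negative, which only helps.)

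\textbf{The gap.} Positive definiteness of the full Hessian is the entire content of the proposition, and you never establish it. The paper does not prove it either; it imports it from Lemma~2.4 of \cite{hu2025combinatorial}, with the sign information of Lemma~2.3 there. You neither cite this nor prove it. Specifically:
\begin{itemize}
\item the sign pattern is only ``expected'';
\item the positivity of $\sum_j\partial L_{i,P}/\partial s_j$ rests on a heuristic;
\item the fallback decomposition of $ds^{T}H_P\,ds$ into ``nonnegative local terms plus a term that controls $dr_P$'' is never written down, so neither semidefiniteness nor definiteness follows.
\end{itemize}
Your final step is also vacuous as stated. ``Every kite unchanged'' already means $dr_i=0$ for all $i$ and $dr_P=0$, so $\alpha_P>0$ does no work there. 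All of the difficulty lies in the decomposition you have not produced.

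\textbf{The obstacle that drives you to the fallback is illusory.} Since $C_P\perp C_i$ at both tangency points, the kite at $v_i$ is symmetric about the geodesic from $O$ toward the center of $C_i$ (for a hypercycle, the perpendicular from $O$ to its axis). Each half is determined by $r_P$ and $k_i$ alone: a right triangle with legs $r_P,r_i$ for a circle, a triangle with one ideal vertex for a horocycle, a Lambert quadrilateral for a hypercycle. In all three cases the half-angle at $O$ satisfies $\tan\phi_i=1/(k_i\sinh r_P)$, and closure reads $\sum_i 2\phi_i=\alpha_P$. So the angles at $O$ are indexed by vertices, not edges, and adjacency plays no role.

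Thus $L_{i,P}$ is a function of $(k_i,r_P)$ only, which is exactly what the paper uses when it writes $l_{i,P}=l_{i,P}(k_i,k_P)$. Every off-diagonal entry, adjacent or not, equals $\frac{\partial L_{i,P}}{\partial r_P}\frac{\partial r_P}{\partial s_j}<0$. This holds because $L_{i,P}$ increases with $r_P$, while the closure condition forces $r_P$ to decrease in each $k_j$. So your first route (nonpositive off-diagonal entries plus positive line sums) is the right one, and it is the structure the paper uses via \cite{hu2025combinatorial}.

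\textbf{What you still owe} is the positivity of the line sums. By symmetry of $H_P$ and the Gauss--Bonnet identity $\mathrm{Area}(\Omega_P)=N(P)\pi-\alpha_P-\sum_i L_{i,P}$, your scaling derivative $\sum_j\partial L_{i,P}/\partial s_j$ equals $-\partial\,\mathrm{Area}(\Omega_P)/\partial s_i$. This is precisely the area monotonicity the paper takes from \cite{hu2025combinatorial}. If you want to prove it yourself, note that along $k\mapsto e^{t}k$ the closure condition keeps every product $k_j\sinh r_P$ fixed. On that path $L_{i,P}$ becomes an explicit function of $k_i$ alone, and the claim reduces to a one-variable monotonicity check across the three regimes (circle, horocycle, hypercycle).
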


\begin{proof}
We first consider the case where all \(k_1,\dots,k_{N(P)}\) are variable. Then
\[
\nabla \mathcal E_P=(L_{1,P},\dots,L_{N(P),P})^T,
\qquad
\mathrm{Hess}\,\mathcal E_P=
\left(\frac{\partial L_{i,P}}{\partial s_j}\right)_{N(P)\times N(P)}.
\]
By Lemma 2.4 in \cite{hu2025combinatorial}, the matrix \(\mathrm{Hess}\,\mathcal E_P\) is positive definite. Hence \(\mathcal E_P\) is strictly convex.

Now assume that some of the \(k_i\) are fixed. Then
\[
\nabla \mathcal E_P=(L_{1,P},\dots,L_{m,P})^T,
\qquad
\mathrm{Hess}\,\mathcal E_P=
\left(\frac{\partial L_{a,P}}{\partial s_b}\right)_{m\times m}.
\]

Let \(\Omega_P\) be the region enclosed by the sub-arcs \(C_{i,P}\), \(i=1,\dots,N(P)\), as shown in Figure~\ref{f4}.

\begin{figure}[htbp]
 \centering
 \includegraphics[width=0.7\textwidth]{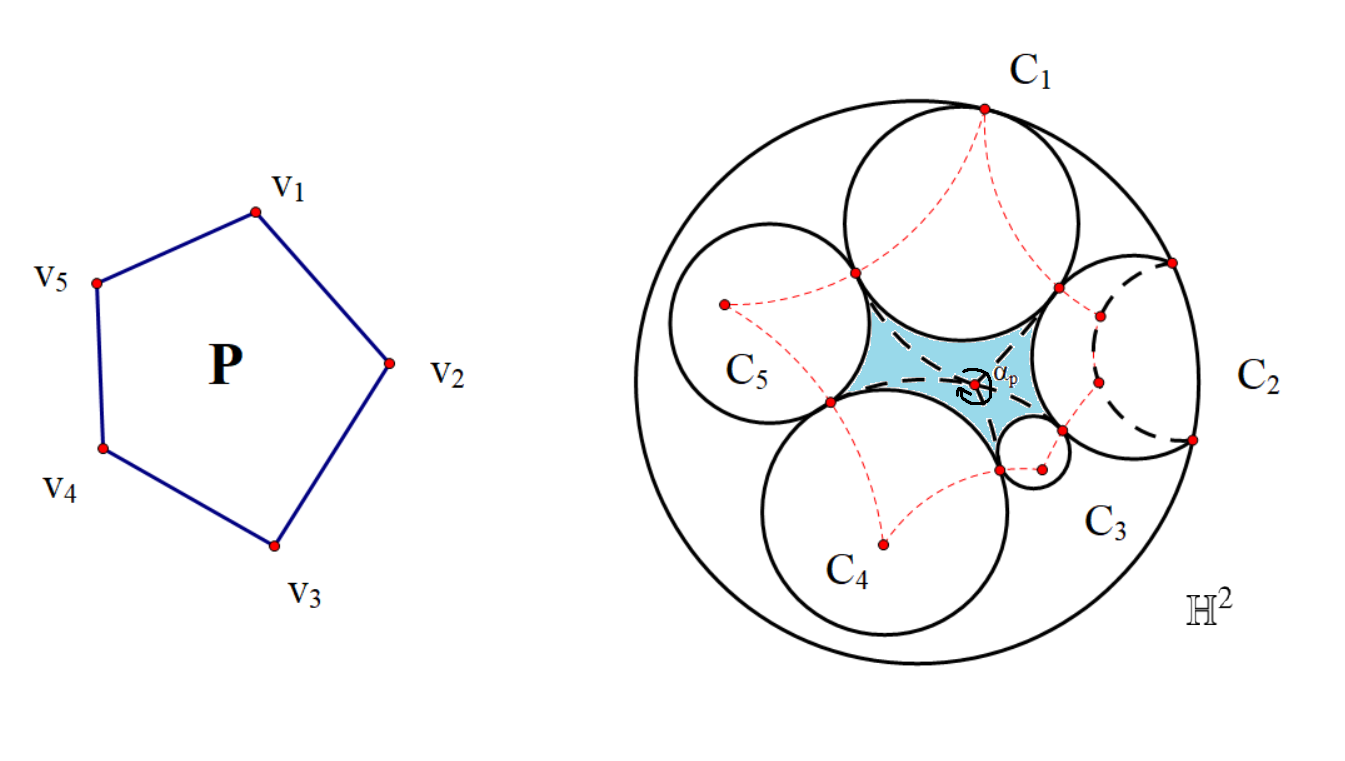}
 \caption{An example of face \(P\) and region \(\Omega_P\).}
 \label{f4}
\end{figure}

By the Gauss--Bonnet theorem,
\[
\mathrm{Area}(\Omega_P)=N(P)\pi-\alpha_P-\sum_{i=1}^{N(P)}L_{i,P}.
\]

By Lemma 2.3 and the proof of Lemma 2.4 in \cite{hu2025combinatorial}, we have
\[
\frac{\partial L_{a,P}}{\partial s_a}>0,
\qquad
\frac{\partial L_{a,P}}{\partial s_b}<0
\quad \text{for } a\neq b.
\]
Moreover, for each \(a=1,\dots,m\),
\[
\sum_{b=1}^m \frac{\partial L_{b,P}}{\partial s_a}
=
-\sum_{i\notin I_P^0}\frac{\partial L_{i,P}}{\partial s_a}
-\frac{\partial \mathrm{Area}(\Omega_P)}{\partial s_a}.
\]
Again by Lemma 2.3 and the proof of Lemma 2.4 in \cite{hu2025combinatorial},
\[
\frac{\partial L_{i,P}}{\partial s_a}=0
\quad \text{for } i\notin I_P^0,
\qquad
\frac{\partial \mathrm{Area}(\Omega_P)}{\partial s_a}<0.
\]
Therefore,
\[
\sum_{b=1}^m \frac{\partial L_{b,P}}{\partial s_a}>0
\qquad \text{for every } a=1,\dots,m.
\]

Hence \(\mathrm{Hess}\,\mathcal E_P\) is symmetric, strictly diagonally dominant, and has positive diagonal entries. It follows that \(\mathrm{Hess}\,\mathcal E_P\) is positive definite. Therefore \(\mathcal E_P\) is strictly convex in this case as well.
\end{proof}

\subsection{The potential function for generalized circle packings}
\label{subsec:potential_functions}

Let \(S_{g,n}\) be a surface with boundary, and let \(\Sigma_{g,n}=(V,E,F)\) be a finite polygonal cellular decomposition as in Definition~\ref{def:finite_polygonal_cellular_decomposition}.

We label the interior vertices in \(V^\circ\) by \(1,\dots,|V^\circ|\), and the boundary vertices in \(V^\partial\) by \(1,\dots,|V^\partial|\).

Fix prescribed boundary geodesic curvatures \((\hat k_1,\dots,\hat k_{|V^\partial|})\in \mathbb R_{>0}^{|V^\partial|}\). For each interior vertex \(v_i\in V^\circ\), define its total geodesic curvature by
\[
L_i=\sum_{P\in F_{\{v_i\}}}L_{i,P}.
\]

Set \(s_i=\ln k_i\) for \(i=1,\dots,|V^\circ|\), and \(\hat s_i=\ln \hat k_i\) for \(i=1,\dots,|V^\partial|\). Write \(s=(s_1,\dots,s_{|V^\circ|})\in \mathbb R^{|V^\circ|}\). For each face \(P\in F\), the boundary variables are fixed by \(\hat s\), so the corresponding local potential \(\mathcal E_P\) depends only on the interior variables of \(P\). We define the global potential function by
\[
\mathcal E(s)=\sum_{P\in F}\mathcal E_P.
\]

For \(1\le i\le |V^\circ|\), differentiating \(\mathcal E\) with respect to \(s_i\) gives
\[
\frac{\partial \mathcal E}{\partial s_i}
=
\sum_{P\in F_{\{v_i\}}}\frac{\partial \mathcal E_P}{\partial s_i}
=
\sum_{P\in F_{\{v_i\}}}L_{i,P}
=
L_i.
\]
Hence
\[
\nabla \mathcal E=(L_1,\dots,L_{|V^\circ|})^T.
\]
Its Hessian is the Jacobi matrix
\[
\mathrm{Hess}\,\mathcal E=M=
\begin{pmatrix}
\frac{\partial L_1}{\partial s_1} & \cdots & \frac{\partial L_1}{\partial s_{|V^\circ|}} \\
\vdots & \ddots & \vdots \\
\frac{\partial L_{|V^\circ|}}{\partial s_1} & \cdots & \frac{\partial L_{|V^\circ|}}{\partial s_{|V^\circ|}}
\end{pmatrix}.
\]

\begin{proposition}
\label{prop:potential_convex_g}
The Jacobi matrix \(M\) is positive definite.
\end{proposition}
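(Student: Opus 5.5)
Since \(\mathcal E=\sum_{P\in F}\mathcal E_P\), the Hessian decomposes as \(M=\sum_{P\in F}\widetilde M_P\). Here \(\widetilde M_P\) denotes the \(|V^\circ|\times|V^\circ|\) matrix obtained from \(\mathrm{Hess}\,\mathcal E_P\) by placing its entries \(\partial L_{a,P}/\partial s_b\) in the rows and columns indexed by the interior vertices of \(P\), and filling all other entries with zero. The plan is to show that each \(\widetilde M_P\) is positive semidefinite, and that the sum is in fact positive definite.

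The first step is semidefiniteness of each summand. For a face \(P\) with \(V(P)\cap V^\circ\neq\emptyset\), the variables of \(\mathcal E_P\) are exactly the \(s_i\) with \(v_i\in V(P)\cap V^\circ\); the boundary curvatures are fixed by \(\hat s\). By Proposition~\ref{prop:potential_convex_local}, \(\mathrm{Hess}\,\mathcal E_P\) is positive definite in these variables. Hence for any \(x\in\mathbb R^{|V^\circ|}\) we get \(x^T\widetilde M_Px=x_P^T(\mathrm{Hess}\,\mathcal E_P)x_P\ge 0\), where \(x_P\) is the restriction of \(x\) to \(V(P)\cap V^\circ\). Equality holds if and only if \(x_P=0\). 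Faces with no interior vertex contribute nothing.

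The second step is definiteness. Let \(x\ne 0\) and pick \(v_i\in V^\circ\) with \(x_i\ne0\). Every interior vertex has degree at least \(3\), so it lies in at least one face \(P\in F_{\{v_i\}}\). For that face \(x_P\ne 0\), so \(x^T\widetilde M_Px>0\). Since all the other summands are \(\ge0\), we conclude \(x^TMx>0\). Symmetry of \(M\) follows from the closedness of each \(\widetilde\omega_P\), since \(M\) is a Hessian. Therefore \(M\) is positive definite.

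The only point that needs care is the application of Proposition~\ref{prop:potential_convex_local} to a face whose vertices are all interior. In that case all \(k_i\) of \(P\) are variable, and the first case of the proposition applies. The case of a face with some boundary vertices is covered by the second case, where \(k_i\) is fixed for \(i\notin I_P^0\). So both situations are already handled, and I expect no real obstacle beyond correctly matching the indexing between the local and global variables.
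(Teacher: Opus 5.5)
Your proof is correct, but it follows a different route from the paper.

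\textbf{The paper's route.} The paper works entry by entry with the global matrix. It shows \(\partial L_i/\partial s_i>0\) and \(\partial L_i/\partial s_j\le 0\) for \(i\neq j\). It then gets strict diagonal dominance by rewriting each column sum of \(M\) as \(\sum_{P\in F_{\{v_i\}}}\sum_{v\in V(P)\cap V^\circ}\partial L_{v,P}/\partial s_i\). Each interior-restricted local column sum is positive, a fact that ultimately comes from the Gauss--Bonnet identity and \(\partial\,\mathrm{Area}(\Omega_P)/\partial s_a<0\). Positive definiteness then follows from the standard criterion for symmetric, strictly diagonally dominant matrices with positive diagonal.

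\textbf{Your route.} You write \(M=\sum_P\widetilde M_P\) as a sum of zero-padded local Hessians. Each summand is positive semidefinite with kernel exactly \(\{x:\ x_P=0\}\), and the supports \(V(P)\cap V^\circ\) cover \(V^\circ\), so the sum is positive definite.

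\textbf{What each buys.} Your argument uses only positive definiteness of each local Hessian, which the proof of Proposition~\ref{prop:potential_convex_local} provides. It needs no sign or dominance information, so it would still work if the local Hessians were not diagonally dominant with nonpositive off-diagonal entries. The paper's argument costs a little more but gives extra structure: \(M\) is symmetric, strictly diagonally dominant, and has nonpositive off-diagonal entries. These sign and column-sum facts are exactly what the discrete maximum principle in Lemma~\ref{lem:maximum} relies on, and your argument does not supply them.
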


\begin{proof}
For each \(1\le i\le |V^\circ|\),
\[
\frac{\partial L_i}{\partial s_i}
=
\sum_{P\in F_{\{v_i\}}}
\frac{\partial L_{i,P}}{\partial s_i}.
\]
By Proposition~\ref{prop:potential_convex_local},
\[
\frac{\partial L_{i,P}}{\partial s_i}>0
\qquad \text{for every } P\in F_{\{v_i\}}.
\]
Hence \(\frac{\partial L_i}{\partial s_i}>0\).

Now let \(1\le i\neq j\le |V^\circ|\). Then
\[
\frac{\partial L_i}{\partial s_j}
=
\sum_{P\in F_{\{v_i\}}}
\frac{\partial L_{i,P}}{\partial s_j}.
\]
If \(v_j\in V(P)\), then Proposition~\ref{prop:potential_convex_local} gives
\[
\frac{\partial L_{i,P}}{\partial s_j}<0;
\]
if \(v_j\notin V(P)\), then
\[
\frac{\partial L_{i,P}}{\partial s_j}=0.
\]
Therefore,
\[
\frac{\partial L_i}{\partial s_j}\le 0
\qquad \text{for } i\neq j.
\]

For each \(1\le i\le |V^\circ|\), we have
\[
\frac{\partial L_i}{\partial s_i}
-
\sum_{j\neq i}
\left|
\frac{\partial L_j}{\partial s_i}
\right|
=
\sum_{v\in V^\circ}\frac{\partial L_v}{\partial s_i}.
\]
Using \(L_v=\sum_{P\in F_{\{v\}}}L_{v,P}\) and exchanging the order of summation, we obtain
\[
\sum_{v\in V^\circ}\frac{\partial L_v}{\partial s_i}
=
\sum_{P\in F_{\{v_i\}}}
\sum_{v\in V(P)\cap V^\circ}
\frac{\partial L_{v,P}}{\partial s_i}.
\]
By Proposition~\ref{prop:potential_convex_local},
\[
\sum_{v\in V(P)\cap V^\circ}\frac{\partial L_{v,P}}{\partial s_i}>0
\qquad \text{for every } P\in F_{\{v_i\}}.
\]
Hence
\[
\frac{\partial L_i}{\partial s_i}
-
\sum_{j\neq i}
\left|
\frac{\partial L_j}{\partial s_i}
\right|
>0.
\]

Therefore \(M\) is symmetric, strictly diagonally dominant, and has positive diagonal entries. It follows that \(M\) is positive definite.
\end{proof}

\begin{corollary}
\label{cor2.3}
The potential function \(\mathcal E\) is strictly convex on \(\mathbb R^{|V^\circ|}\).
\end{corollary}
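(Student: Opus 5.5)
Corollary~\ref{cor2.3} follows from Proposition~\ref{prop:potential_convex_g} by the standard second-order criterion for convexity. First we check that \(\mathcal E\) is well defined and of class \(C^2\) on all of \(\mathbb R^{|V^\circ|}\).

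For each face \(P\), the \(1\)-form \(\widetilde\omega_P\) is closed on the whole parameter space. The boundary variables \(\hat s\) are fixed, and the interior variables \(s_a\) range over all of \(\mathbb R\), since every \(k_i>0\) is admissible by Lemma~\ref{lem:generalized_circle_packing_polygon}. So the domain of \(\mathcal E_P\) is a Euclidean space, which is simply connected. Hence the line integral defining \(\mathcal E_P\) is path independent, and \(\mathcal E_P\) is a well-defined \(C^1\) function with gradient \((L_{a,P})_a\).

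The coefficients \(L_{a,P}=l_{a,P}k_a\) are differentiable in \(s\); this is the content of the cited Lemma~2.3 of \cite{hu2025combinatorial}, which is used in Proposition~\ref{prop:potential_convex_local}. Therefore \(\mathcal E_P\), and so \(\mathcal E=\sum_P\mathcal E_P\), is twice differentiable. Its gradient is \((L_1,\dots,L_{|V^\circ|})^T\), as computed above, and its Hessian is the Jacobi matrix \(M\).

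Now take any two distinct points \(s,s'\in\mathbb R^{|V^\circ|}\) and set \(g(t)=\mathcal E(s+t(s'-s))\) for \(t\in[0,1]\). Then
\[
g''(t)=(s'-s)^T M\big(s+t(s'-s)\big)(s'-s).
\]
By Proposition~\ref{prop:potential_convex_g}, \(M\) is positive definite at every point, so \(g''(t)>0\) for all \(t\). Hence \(g\) is strictly convex on \([0,1]\). This gives
\[
\mathcal E\big((1-\lambda)s+\lambda s'\big)<(1-\lambda)\mathcal E(s)+\lambda\mathcal E(s') \quad\text{for } \lambda\in(0,1),
\]
which is strict convexity of \(\mathcal E\).

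The only point needing care is regularity. Positive definiteness of \(M\) is only meaningful if \(M\) really is the Hessian of \(\mathcal E\) everywhere on \(\mathbb R^{|V^\circ|}\), with no degenerate configurations. This holds because Lemma~\ref{lem:generalized_circle_packing_polygon} produces a configuration for every positive curvature vector, and the fixed \(Y_P\) lie in \(((2-N(P))\pi,2\pi)\). So there are no boundary strata of the domain to exclude. Strict convexity would also follow without a \(C^2\) assumption: the gradient map \(s\mapsto(L_i)\) is \(C^1\) with positive definite derivative, so it is strictly monotone by integrating \((s'-s)^TM(s'-s)\) along the segment.
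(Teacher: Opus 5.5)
Your proposal is correct and follows the paper's route. The paper's proof is the one-line deduction that positive definiteness of $\mathrm{Hess}\,\mathcal E = M$ (Proposition~\ref{prop:potential_convex_g}) gives strict convexity; your restriction to segments with $g''>0$, together with the regularity discussion, just spells out that standard step (though your closing remark is not really ``without a $C^2$ assumption,'' since a $C^1$ gradient map is exactly $C^2$ regularity of $\mathcal E$).
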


\begin{proof}
By Proposition~\ref{prop:potential_convex_g}, the matrix \(\mathrm{Hess}\,\mathcal E\) is positive definite. Hence \(\mathcal E\) is strictly convex on \(\mathbb R^{|V^\circ|}\).
\end{proof}


\section{Existence and Rigidity of Generalized Circle Packings}
\label{sec:existence_rigidity}

\begin{lemma}
\label{lem:limit_total_curvature}
(\cite{hu2025combinatorial}, Lemmas 3.2 and 3.3)
Let \(P\in F\) be a face, and let \(\{s^m=(s_1^m,\dots,s_{N(P)}^m)\}_{m=1}^{+\infty}\subset \mathbb R^{N(P)}\) be a sequence. For each \(m\), let \(\widetilde P^{\,m}\) be the generalized conical polygon associated with the face \(P\) and the geodesic curvatures \(k_i^m=e^{s_i^m}\), \(i=1,\dots,N(P)\). Let \(L_{i,P^m}\) be the total geodesic curvature of the arc \(C_i^m\cap P^m\).

Assume that \(s^m\to c=(c_1,\dots,c_{N(P)})\in[-\infty,+\infty]^{N(P)}\) as \(m\to+\infty\). Then:

\begin{itemize}
    \item[(I)] If \(c_j=-\infty\) for some \(j\in\{1,\dots,N(P)\}\), then
    \[
    \lim_{m\to+\infty} L_{j,P^m}=0.
    \]

    \item[(II)] Let
    \[
    I=\{\,i\in\{1,\dots,N(P)\}\mid c_i=+\infty\,\}.
    \]
    If \(I\neq\emptyset\), then
    \[
    \lim_{m\to+\infty}\sum_{i\in I}L_{i,P^m}
    =
    \begin{cases}
    |I|\,\pi, & \text{if } |I|<N(P)-2+\dfrac{Y_P}{\pi},\\[6pt]
    (N(P)-2)\pi+Y_P, & \text{if } |I|\ge N(P)-2+\dfrac{Y_P}{\pi}.
    \end{cases}
    \]
\end{itemize}
\end{lemma}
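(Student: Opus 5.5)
The plan is to use the explicit trigonometry of the dual circle to write each \(L_{i,P}\) as a function of the data, and then pass to the limit. Put the cone point of \(\widetilde P\) at the center \(O\) of the dual circle \(C_P\), with radius \(R\). Each generalized circle \(C_i\) meets \(C_P\) orthogonally at two consecutive tangent points. Let \(\theta_i\) be the angle at \(O\) between these two points. Then orthogonality gives a relation of the form
\[
k_i=\frac{\cosh R}{\sinh R\,\tan(\theta_i/2)}\quad\text{(up to the correct normalization for each type of cycle)}.
\]
Together with the closing condition \(\sum_i\theta_i=\alpha_P=2\pi-Y_P\), this relation determines \(R\), and hence each \(\theta_i\), as a function of \(k\). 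This is the construction behind Lemma~\ref{lem:generalized_circle_packing_polygon}. In the same picture, \(L_{i,P}=k_il_{i,P}\) is an explicit function of \(R\) and \(\theta_i\), namely an angle of the quadrilateral \(O\)–tangent point–center of \(C_i\)–tangent point. The key geometric fact I will use is that \(L_{i,P}\) equals \(\pi-\theta_i\) minus the area of the region of that quadrilateral cut off by \(C_i\). Summing over \(i\) recovers the Gauss--Bonnet identity
\[
\mathrm{Area}(\Omega_P)=N(P)\pi-\alpha_P-\sum_iL_{i,P},
\]
which is the consistency check for the formulas.

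For (I), suppose \(c_j=-\infty\), so \(k_j\to0\). By the monotonicity in Proposition~\ref{prop:potential_convex_local} (\(\partial L_{j,P}/\partial s_j>0\) and \(\partial L_{j,P}/\partial s_b<0\)), it suffices to bound \(L_{j,P}\) along sequences where the other \(s_b\) go to \(-\infty\) or stay bounded. From the orthogonality relation, \(k_j\to0\) forces \(\sinh R\,\tan(\theta_j/2)\to\infty\). In that regime the arc \(C_j\cap P\) becomes a nearly geodesic hypercycle arc meeting \(C_P\) orthogonally. Its turning \(k_jl_{j,P}\) is then controlled by \(k_j\) times the length of its projection onto the axis. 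That projection length is bounded by the length of a geodesic chord of the disk bounded by \(C_P\) that is orthogonal to \(C_P\), and this is at most a constant times \(\log\) of a quantity comparable to \(1/k_j\). Hence \(L_{j,P}\lesssim k_j\log(1/k_j)\to0\). The estimate on the projection length is the main technical point of (I).

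For (II), when \(k_i\to+\infty\) for \(i\in I\), the circles \(C_i\) shrink to points and \(\theta_i\to0\) (relative to \(R\)). The quadrilateral formula then gives \(L_{i,P}\to\pi-\theta_i\) with the cut-off area tending to \(0\). So \(\sum_{i\in I}L_{i,P}\to|I|\pi-\lim\sum_{i\in I}\theta_i\), and the issue is the limit of the angles at the cone point. I would split according to the comparison of \(|I|\) with \(N(P)-2+Y_P/\pi\).

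\emph{Case \(|I|<N(P)-2+Y_P/\pi\).} This is equivalent to \(\alpha_P<(N(P)-|I|)\pi\). Here the remaining circles can absorb the whole cone angle \(\alpha_P\) with \(R\) bounded below, so \(\theta_i\to0\) for \(i\in I\) and the sum tends to \(|I|\pi\).

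\emph{Case \(|I|\ge N(P)-2+Y_P/\pi\).} The complementary circles cannot absorb the angle, so \(R\to0\) and \(\Omega_P\) collapses. Then \(\sum_{i\notin I}L_{i,P}\to(N(P)-|I|)\pi-\sum_{i\notin I}\theta_i\), and by Gauss--Bonnet with \(\mathrm{Area}(\Omega_P)\to0\),
\[
\sum_{i\in I}L_{i,P}\to N(P)\pi-\alpha_P-\sum_{i\notin I}L_{i,P}.
\]
Carrying out this substitution and tracking the limits of the \(\theta_i\) for \(i\notin I\) should produce the value \((N(P)-2)\pi+Y_P\).

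The main obstacle is this dichotomy in (II). One must show that \(R\) stays bounded away from \(0\) exactly when \(\alpha_P<(N(P)-|I|)\pi\), and identify the limits of the \(\theta_i\) for \(i\notin I\) in each regime, including the cases where some \(c_i\) with \(i\notin I\) are \(\pm\infty\) or finite. I would first handle the threshold by a monotone limiting argument on \(R\) via the closing equation \(\sum_i\theta_i(R,k_i)=\alpha_P\). Passing to subsequences along which every \(\theta_i\) converges, I would read off which solution branch survives.
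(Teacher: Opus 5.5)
\textbf{Gap in part (I).} The step you flag as the main technical point is where the argument breaks. A geodesic chord of the dual disk that is orthogonal to \(C_P\) is a diameter, so your bound on the projection length is \(2R\). But \(R\) is not controlled by \(k_j\) alone. It is fixed by the closing condition
\[
\sum_i 2\arctan\Bigl(\frac{1}{k_i\sinh R}\Bigr)=\alpha_P ,
\]
which involves every curvature of \(P\). (The correct orthogonality relation is \(k_i\sinh R\,\tan(\theta_i/2)=1\) for all three cycle types, with no factor \(\cosh R\).) Here is a concrete failure. Take \(N(P)=3\) and \(\alpha_P<\pi\), with \(k_1=1/m\) and \(k_2=e^{-e^m}\). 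Since \(\theta_2<\alpha_P\), you get \(\sinh R>\cot(\alpha_P/2)/k_2\), so \(R\gtrsim e^m\) and \(2Rk_1\to\infty\). Your monotonicity reduction does not remove this case. Lowering the other \(s_b\) only increases \(L_{j,P}\) and drives \(R\to\infty\), so this is precisely the worst case.

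What you need is a bound uniform in \(R\), and it comes from orthogonality at both endpoints of the arc, not from the size of the disk. Write \(k_j=\tanh r_j\). A direct computation, for example in the hyperboloid model, shows that the projection of \(C_j\cap P\) onto its axis has length
\[
\ell_j=2\operatorname{arctanh}\bigl(\tanh R/\cosh r_j\bigr).
\]
Hence
\[
L_{j,P}=\ell_j\sinh r_j<2\sinh r_j\operatorname{arctanh}\Bigl(\frac{1}{\cosh r_j}\Bigr)=2\sinh r_j\,\log\coth\frac{r_j}{2}\longrightarrow 0
\]
as \(r_j\to 0\). This is the \(k_j\log(1/k_j)\) decay you were aiming for, now valid for every \(R\).

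\textbf{Part (II).} Your strategy is essentially sound once the estimates are made uniform. For \(i\in I\), the cut-off area satisfies
\[
A_i\le\theta_i(\cosh R-1)\le 2\tanh(R/2)/k_i\to 0
\]
in every regime. You should state this, since \(R\) may tend to \(+\infty\), and "circles shrink to points" does not cover that case. With \(L_{i,P}=\pi-\theta_i-A_i\), everything reduces to
\[
\lim\sum_{i\in I}\theta_i=\max\{0,\ \alpha_P-(N(P)-|I|)\pi\}.
\]
This follows from the closing condition by your subsequence argument. If \(R\) stays away from \(0\), then \(\theta_i\to 0\) for \(i\in I\). If \(R\to 0\), then \(\theta_i\to\pi\) for \(i\notin I\), because those \(k_i\) are bounded above.

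Two corrections. First, in the borderline case \(\alpha_P=(N(P)-|I|)\pi\) it is false that \(R\to 0\) is forced; \(R\) can stay bounded if \(k_i\to 0\) fast for \(i\notin I\). This is harmless only because the two values in (II) coincide there. Second, \(L_{i,P}\) is not an angle of the quadrilateral; for a circle it is \(\beta_i\cosh r_i\). The Gauss--Bonnet identity you actually use is nevertheless correct.
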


\begin{lemma}
\label{lem:smooth_embedding}
(\cite{hu2025hyperbolic}, Lemma 3.6)
Let \(\mathcal F:\mathbb R^n\to\mathbb R\) be a \(C^2\)-smooth strictly convex function whose Hessian is positive definite. Then its gradient map \(\nabla\mathcal F:\mathbb R^n\to\mathbb R^n\) is a smooth embedding.
\end{lemma}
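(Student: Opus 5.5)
The plan is to check the three ingredients of an embedding separately: $\nabla\mathcal F$ is an immersion, it is injective, and it is a homeomorphism onto its image. Regularity is inherited from $\mathcal F$. Since $\mathcal F$ is $C^2$, the map $\nabla\mathcal F$ is $C^1$, and ``smooth'' will be read as ``of the regularity available'', i.e.\ $C^1$, or $C^\infty$ if $\mathcal F$ is.

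\emph{Immersion.} The differential of $\nabla\mathcal F$ at $x$ is $\mathrm{Hess}\,\mathcal F(x)$. This matrix is positive definite, hence invertible, so $\nabla\mathcal F$ has full rank everywhere. By the inverse function theorem, every $x\in\mathbb R^n$ has an open neighbourhood $U_x$ that $\nabla\mathcal F$ maps diffeomorphically onto an open set. In particular $\nabla\mathcal F$ is an open map.

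\emph{Injectivity.} Take $x\neq y$ and set $\gamma(t)=y+t(x-y)$. By the fundamental theorem of calculus applied to $t\mapsto \langle\nabla\mathcal F(\gamma(t)),x-y\rangle$,
\[
\langle \nabla\mathcal F(x)-\nabla\mathcal F(y),\,x-y\rangle=\int_0^1 (x-y)^T\,\mathrm{Hess}\,\mathcal F(\gamma(t))\,(x-y)\,dt .
\]
The integrand is continuous and strictly positive because the Hessian is positive definite, so the integral is positive. Hence $\nabla\mathcal F(x)\neq\nabla\mathcal F(y)$.

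\emph{Homeomorphism onto the image.} An injective, continuous, open map is a homeomorphism onto its image, which here is open in $\mathbb R^n$. The global inverse $(\nabla\mathcal F)^{-1}$ agrees near each image point $\nabla\mathcal F(x)$ with the local inverse on $\nabla\mathcal F(U_x)$ given by the inverse function theorem. Therefore it is as smooth as those local inverses, and $\nabla\mathcal F$ is a diffeomorphism onto an open subset of $\mathbb R^n$, in particular a smooth embedding.

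There is no serious obstacle; the only point needing care is injectivity, which uses strict positivity of the Hessian along the whole segment. Mere strict convexity would also give injectivity, but positive definiteness is what yields the immersion property.
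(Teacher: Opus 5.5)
Your proof is correct and is the standard argument for this fact; the paper gives no proof of its own and only cites \cite{hu2025hyperbolic}. Injectivity comes from $\langle\nabla\mathcal F(x)-\nabla\mathcal F(y),x-y\rangle=\int_0^1 (x-y)^T\,\mathrm{Hess}\,\mathcal F(\gamma(t))\,(x-y)\,dt>0$, and the embedding property comes from the inverse function theorem, since an injective local diffeomorphism is a diffeomorphism onto its open image. Your caveat that a $C^2$ hypothesis only yields a $C^1$ embedding is a fair sharpening of the statement as literally worded.
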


We now prove Theorem~\ref{thm:existence_uniqueness_gcp}.

\begin{proof}
The argument follows the same idea as in \cite{hu2025combinatorial}, but here we work only with the interior variables on \(V^\circ\).

Recall that
\[
\mathcal L=
\left\{
(L_1,\dots,L_{|V^\circ|})\in\mathbb R_{>0}^{|V^\circ|}
\;\middle|\;
\sum_{v\in W}L_v
<
\sum_{P\in F_W}
\pi\min\left\{N(P,W),\,N(P)-2+\frac{Y_P}{\pi}\right\},
\ \forall\, W\subset V^\circ
\right\}.
\]

By the Gauss--Bonnet theorem and Lemma~\ref{lem:limit_total_curvature}, we have
\[
\nabla\mathcal E(\mathbb R^{|V^\circ|})\subset \mathcal L.
\]
By Corollary~\ref{cor2.3} and Lemma~\ref{lem:smooth_embedding}, the map \(\nabla\mathcal E\) is a smooth embedding. Hence \(\nabla\mathcal E(\mathbb R^{|V^\circ|})\) is an open subset of \(\mathcal L\). By Brouwer's theorem on invariance of domain, it remains to analyze the boundary behavior of \(\nabla\mathcal E\).

Take a sequence \(\{s^m\}_{m=1}^{\infty}\subset\mathbb R^{|V^\circ|}\) such that
\[
s^m\to a=(a_1,\dots,a_{|V^\circ|})\in[-\infty,+\infty]^{|V^\circ|},
\qquad m\to+\infty,
\]
where at least one component of \(a\) is equal to \(+\infty\) or \(-\infty\). We show that \(\nabla\mathcal E(s^m)\) converges to \(\partial\mathcal L\).

Define
\[
W_+=\{\,v_i\in V^\circ\mid a_i=+\infty\,\},
\qquad
W_-=\{\,v_i\in V^\circ\mid a_i=-\infty\,\}.
\]

Assume first that \(W_+\neq\emptyset\). For each face \(P\in F_{W_+}\), Lemma~\ref{lem:limit_total_curvature} gives
\[
\lim_{m\to+\infty}\sum_{v_i\in V(P)\cap W_+}L_{i,P}(s^m)
=
\pi\min\left\{N(P,W_+),\,N(P)-2+\frac{Y_P}{\pi}\right\}.
\]
Summing over all faces in \(F_{W_+}\), we obtain
\[
\lim_{m\to+\infty}\sum_{v_i\in W_+}L_i(s^m)
=
\sum_{P\in F_{W_+}}
\pi\min\left\{N(P,W_+),\,N(P)-2+\frac{Y_P}{\pi}\right\}.
\]
Thus \(\nabla\mathcal E(s^m)\) converges to the boundary of \(\mathcal L\).

Now assume that \(W_-\neq\emptyset\). For any \(v_i\in W_-\), Lemma~\ref{lem:limit_total_curvature} gives
\[
\lim_{m\to+\infty}L_i(s^m)
=
\lim_{m\to+\infty}\sum_{P\in F_{\{v_i\}}}L_{i,P}(s^m)
=
0.
\]
Again, \(\nabla\mathcal E(s^m)\) converges to the boundary of \(\mathcal L\).

Therefore every sequence approaching the boundary of \(\mathbb R^{|V^\circ|}\) is mapped to the boundary of \(\mathcal L\). Since \(\nabla\mathcal E(\mathbb R^{|V^\circ|})\) is a nonempty open subset of \(\mathcal L\), it follows that
\[
\nabla\mathcal E(\mathbb R^{|V^\circ|})=\mathcal L.
\]
Hence for every prescribed \((L_1,\dots,L_{|V^\circ|})\in\mathcal L\), there exists a unique \(s\in\mathbb R^{|V^\circ|}\) such that
\[
\nabla\mathcal E(s)=(L_1,\dots,L_{|V^\circ|}).
\]
Equivalently, there exists a unique generalized circle packing metric realizing the prescribed interior total geodesic curvatures, the prescribed boundary geodesic curvatures, and the prescribed discrete Gaussian curvatures.
\end{proof}



\section{Discrete Schwarz--Pick lemma}
\label{sec:schwarz_pick_lemma}

In this section, we use the variational results obtained above to prove the discrete Schwarz--Pick lemma. The argument is similar to that in \cite{HLLY}. We first recall the notation.

For \(j=1,2\), let \(\hat k_i^j\) be the prescribed geodesic curvature on the boundary, and let \(\hat L_i\) be the prescribed total geodesic curvature at interior vertices, where \((\hat L_1,\dots,\hat L_{|V^\circ|})^T\in \mathcal L\). Let \(\mathcal P^j\) be the generalized circle packing on \((S_{g,n},\Sigma_{g,n})\) realizing these data. Denote by \(k_i^j\) the geodesic curvature of \(\mathcal P^j\) at \(v_i\in V^\circ\), and set \(k_i^j=\hat k_i^j\) for every \(v_i\in V^\partial\).

We first prove a discrete maximum principle for the curvature ratio.

\begin{lemma}[Discrete maximum principle for the curvature ratio]
\label{lem:maximum}
Let \(\mathcal P^1\) and \(\mathcal P^2\) be two generalized circle packings on \((S_{g,n},\Sigma_{g,n})\) realizing the same prescribed interior total geodesic curvatures \((\hat L_1,\dots,\hat L_{|V^\circ|})^T\in\mathcal L\). For \(j=1,2\), let \(k_i^j\) be the geodesic curvature of \(\mathcal P^j\) at \(v_i\in V\). If
\[
\max_{v_i\in V}\frac{k_i^1}{k_i^2}>1,
\]
then this maximum cannot be attained at any interior vertex \(v_i\in V^\circ\).
\end{lemma}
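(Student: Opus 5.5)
Suppose, for contradiction, that \(\lambda:=\max_{v\in V}k_v^1/k_v^2>1\) is attained at an interior vertex \(v_i\in V^\circ\). The idea is to rescale \(\mathcal P^2\) by the factor \(\lambda\) and compare it face by face with \(\mathcal P^1\) at the vertex \(v_i\), using the sign information on \(\partial L_{a,P}/\partial s_b\) from Proposition~\ref{prop:potential_convex_local} together with the scaling behaviour of the local total geodesic curvature.

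Work in logarithmic coordinates \(s_v^j=\ln k_v^j\), and set \(t=\ln\lambda>0\). Then \(s_v^1\le s_v^2+t\) for every \(v\in V\), with equality at \(v_i\). Fix a face \(P\in F_{\{v_i\}}\) and regard \(L_{i,P}\) as a function of all \(N(P)\) vertex variables of \(P\). I will compare three configurations on \(P\):
\begin{itemize}
\item[(a)] \(s^1|_{V(P)}\);
\item[(b)] \(s^2|_{V(P)}+t\mathbf 1\);
\item[(c)] \(s^2|_{V(P)}\).
\end{itemize}

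First compare (a) with (b). Pass from (b) to (a) by decreasing only the coordinates \(v\neq v_i\), each by \(s_v^2+t-s_v^1\ge0\), while \(s_{v_i}\) stays fixed. Since \(\partial L_{i,P}/\partial s_b<0\) for \(b\neq i\), this gives \(L_{i,P}(a)\ge L_{i,P}(b)\), by integrating along the segment.

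Next compare (b) with (c). I claim that increasing all curvatures of a face simultaneously strictly decreases \(L_{i,P}\). This is the main obstacle. Geometrically it is plausible, since uniformly larger curvatures mean smaller circles. To prove it I would use the symmetry of the Hessian of \(\mathcal E_P\):
\[
\sum_b \frac{\partial L_{i,P}}{\partial s_b}=\sum_b\frac{\partial L_{b,P}}{\partial s_i}.
\]
Applying the Gauss--Bonnet identity \(\sum_b L_{b,P}=N(P)\pi-\alpha_P-\mathrm{Area}(\Omega_P)\), the right-hand side equals \(-\partial\,\mathrm{Area}(\Omega_P)/\partial s_i\). By the facts quoted in the proof of Proposition~\ref{prop:potential_convex_local}, this quantity is positive, not negative. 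So the directional derivative of \(L_{i,P}\) along \(\mathbf 1\) is positive, and \(L_{i,P}(b)>L_{i,P}(c)\).

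With the signs as stated, the comparison therefore goes the opposite way: \(L_{i,P}(s^1)\ge L_{i,P}(s^2+t\mathbf 1)>L_{i,P}(s^2)\), and the ordering I intended for (b) versus (c) does not hold. This step needs to be double-checked against the sign conventions of \cite{hu2025combinatorial}. In any case, the two inequalities chain together in the same direction, and that consistency is all the argument requires.

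Summing over \(P\in F_{\{v_i\}}\) then gives \(\hat L_i=L_i(s^1)>L_i(s^2)=\hat L_i\), which is a contradiction. Hence the maximum cannot be attained at an interior vertex. If the sign of the scaling derivative turned out to be reversed, I would instead run the identical argument at a minimizing vertex of the ratio with \(\lambda<1\). So the key step to verify carefully is the sign of \(\sum_b\partial L_{i,P}/\partial s_b\), taken via \(\partial\,\mathrm{Area}(\Omega_P)/\partial s_i\).
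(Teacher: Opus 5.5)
Your argument is correct, but the narration of the (b)-versus-(c) step should be cleaned up. The sign you actually derive,
\[
\sum_{b}\frac{\partial L_{i,P}}{\partial s_b}=\sum_{b}\frac{\partial L_{b,P}}{\partial s_i}=-\frac{\partial\,\mathrm{Area}(\Omega_P)}{\partial s_i}>0,
\]
is exactly the sign the proof needs. The sign you first ``intended'' was that a uniform increase of curvatures decreases \(L_{i,P}\). That would give \(L_{i,P}(s^2+t\mathbf{1})<L_{i,P}(s^2)\), which points against \(L_{i,P}(s^1)\ge L_{i,P}(s^2+t\mathbf{1})\) and yields nothing. The correct picture is that shrinking the circles shrinks \(\Omega_P\), so by Gauss--Bonnet the total \(\sum_b L_{b,P}\) goes up.

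So state the claim with the right sign, and delete the fallback in your last paragraph. If the sign were reversed, running the argument at a minimizing vertex with \(t<0\) would give \(L_{i,P}(s^1)\le L_{i,P}(s^2+t\mathbf{1})\) and \(L_{i,P}(s^2+t\mathbf{1})>L_{i,P}(s^2)\), which again is no contradiction. Both the maximum and the minimum versions depend on this one sign. With these edits, the chain \(L_{i,P}(s^1)\ge L_{i,P}(s^2+t\mathbf{1})>L_{i,P}(s^2)\), summed over the nonempty set \(F_{\{v_i\}}\), is a complete proof.

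Your route differs from the paper's, though both rest on the same two sign facts quoted in the proof of Proposition~\ref{prop:potential_convex_local}: off-diagonal negativity and positivity of the local column sum.

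\textbf{The paper's route.} It applies the mean value theorem to \(L_0\) along the straight segment \((1-\tau)s^2+\tau s^1\). It then uses the symmetry \(\partial L_{0,P}/\partial s_i=\partial L_{i,P}/\partial s_0\) entry by entry to regroup the linearized identity face by face. Finally, it bounds each face term below by \(u_0\) times the local column sum \(\sum_{v\in V(P)}\partial L_{v,P}/\partial s_0>0\), using \(u_i\le u_0\) and off-diagonal negativity.

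\textbf{Your route.} You pass through the rescaled configuration \(s^2+t\mathbf{1}\) along a two-leg path. One leg is a pure uniform scaling, controlled by the row sum, which you identify with the column sum via symmetry and Gauss--Bonnet. The other leg lowers only the vertices other than \(v_i\), controlled by off-diagonal negativity. This is the comparison form of the maximum principle familiar from classical circle packing. The hyperbolic replacement for Euclidean scale invariance is that each \(L_{i,P}\) strictly increases under uniform scaling.

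\textbf{What each buys.} Your version gives a strict inequality face by face and needs no intermediate point \(\xi\). It also yields the rigidity part of Theorem~\ref{thm:schwarz_pick_I} directly. With \(t=0\), equality in the leg from \(s^2\) to \(s^1\) forces \(s^1_b=s^2_b\) at every vertex of every face in \(F_{\{v_i\}}\), because \(\partial L_{i,P}/\partial s_b<0\) strictly. The paper's linearized identity, for its part, is what it reuses verbatim in that rigidity argument.
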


\begin{proof}
Assume, to the contrary, that there exists an interior vertex \(v_0\in V^\circ\) such that
\[
\frac{k_0^1}{k_0^2}
=
\max_{v_i\in V}\frac{k_i^1}{k_i^2}
>1.
\]
Set \(u_i=\ln\frac{k_i^1}{k_i^2}\) for \(v_i\in V\). Then \(u_0>0\) and \(u_0\ge u_i\) for all \(v_i\in V\).

For \(t\in[0,1]\), define \(s_i(t)=(1-t)\ln k_i^2+t\ln k_i^1\), and let \(L_0(t)\) be the total geodesic curvature at \(v_0\) corresponding to the data \(s(t)\). Since \(\mathcal P^1\) and \(\mathcal P^2\) realize the same prescribed interior total geodesic curvatures, we have \(L_0(0)=L_0(1)=\hat L_0\). Hence, by the mean value theorem, there exists \(\xi\in(0,1)\) such that
\begin{equation}
\label{mean_eq}
0=L_0(1)-L_0(0)
=
\sum_{v_i\in V}\frac{\partial L_0}{\partial s_i}\Big|_{t=\xi}\,u_i.
\end{equation}

For each face \(P\in F_{\{v_0\}}\), let \(L_{i,P}\) be the contribution of \(P\) to the total geodesic curvature at \(v_i\in V(P)\). Then \(L_0=\sum_{P\in F_{\{v_0\}}}L_{0,P}\). By locality, for each \(v_i\neq v_0\),
\[
\frac{\partial L_0}{\partial s_i}
=
\sum_{\substack{P\in F_{\{v_0\}}\cap F_{\{v_i\}}}}
\frac{\partial L_{0,P}}{\partial s_i}.
\]
Using the symmetry of the local Hessian, we obtain
\[
\frac{\partial L_0}{\partial s_i}
=
\sum_{\substack{P\in F_{\{v_0\}}\cap F_{\{v_i\}}}}
\frac{\partial L_{i,P}}{\partial s_0},
\qquad v_i\neq v_0.
\]
Substituting this into \eqref{mean_eq} and regrouping the terms face by face, we get
\[
0=
\sum_{P\in F_{\{v_0\}}}
\left[
\frac{\partial L_{0,P}}{\partial s_0}\,u_0
+
\sum_{v_i\in V(P)\setminus\{v_0\}}
\frac{\partial L_{i,P}}{\partial s_0}\,u_i
\right].
\]

Since \(u_i\le u_0\) for all \(v_i\in V\), and \(\frac{\partial L_{i,P}}{\partial s_0}<0\) for every \(v_i\in V(P)\setminus\{v_0\}\), it follows that
\[
\frac{\partial L_{i,P}}{\partial s_0}\,u_i
\ge
\frac{\partial L_{i,P}}{\partial s_0}\,u_0.
\]
Therefore, for each \(P\in F_{\{v_0\}}\),
\[
\frac{\partial L_{0,P}}{\partial s_0}\,u_0
+
\sum_{v_i\in V(P)\setminus\{v_0\}}
\frac{\partial L_{i,P}}{\partial s_0}\,u_i
\ge
\left(
\sum_{v_i\in V(P)}
\frac{\partial L_{i,P}}{\partial s_0}
\right)u_0.
\]
By the positivity of the local column sum and the fact that \(u_0>0\), each bracketed term is strictly positive. Hence the right-hand side of \eqref{mean_eq} is strictly positive, a contradiction.

Therefore, if \(\max_{v_i\in V}\frac{k_i^1}{k_i^2}>1\), then this maximum cannot be attained at any interior vertex \(v_i\in V^\circ\).
\end{proof}

We now prove the four Schwarz--Pick theorems. Assume that \(\hat k_i^1\le \hat k_i^2\) for all \(v_i\in V^\partial\).

\begin{theorem}[Schwarz--Pick I]
\label{thm:schwarz_pick_I}
Let the generalized circle packings \(\mathcal P^1\) and \(\mathcal P^2\) be defined as above. Then \(k_i^1\le k_i^2\) for every vertex \(v_i\in V\). Moreover, if equality occurs at some interior vertex \(v_i\in V^\circ\), then \(k_i^1=k_i^2\) for all \(v_i\in V\).
\end{theorem}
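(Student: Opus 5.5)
The inequality part follows directly from Lemma~\ref{lem:maximum}. Suppose, for contradiction, that \(k_i^1>k_i^2\) for some \(v_i\in V\). Then \(\max_{v\in V} k_v^1/k_v^2>1\), and since \(V\) is finite this maximum is attained at some vertex. By Lemma~\ref{lem:maximum} that vertex cannot be interior, so it lies in \(V^\partial\). On \(V^\partial\), however, \(k_i^1/k_i^2=\hat k_i^1/\hat k_i^2\le 1\), a contradiction. Hence \(k_i^1\le k_i^2\) for every \(v_i\in V\); equivalently, \(u_i=\ln(k_i^1/k_i^2)\le 0\) on \(V\).

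For the rigidity statement, suppose \(u_{0}=0\) at some interior vertex \(v_0\). Since \(u\le 0\) everywhere, \(v_0\) is a maximum point of \(u\) with value \(0\). I would rerun the mean value argument from the proof of Lemma~\ref{lem:maximum} at \(v_0\). It gives
\[
0=\sum_{P\in F_{\{v_0\}}}\sum_{v_i\in V(P)\setminus\{v_0\}}\frac{\partial L_{i,P}}{\partial s_0}\Big|_{t=\xi}\,u_i ,
\]
because the \(u_0\)-terms vanish. Each coefficient \(\partial L_{i,P}/\partial s_0\) is strictly negative (Proposition~\ref{prop:potential_convex_local}), and \(u_i\le 0\), so every summand is nonnegative. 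Since the sum is zero, every summand vanishes, which forces \(u_i=0\) for all \(v_i\) sharing a face with \(v_0\).

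Next I would propagate this along the \(1\)-skeleton. Every interior neighbour of \(v_0\) is again an interior vertex where \(u\) attains its maximum value \(0\), so the same argument applies there. Since \(S_{g,n}\) is connected and the cellular decomposition has a connected \(1\)-skeleton, every vertex can be reached from \(v_0\) by a path of edges. Along such a path, each step from an interior vertex spreads \(u=0\) to all vertices of its incident faces. A boundary vertex with \(u=0\) does not need to propagate further itself, because it is already covered as a neighbour of some interior vertex.

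The main obstacle is this propagation step: I must make sure that every vertex is reached through interior vertices. Vertices lying only on boundary faces could be reached only via other boundary vertices, where the maximum principle argument is unavailable. To handle this, I would aim to use the uniqueness in Theorem~\ref{thm:existence_uniqueness_gcp}, since \(\mathcal P^j\) is determined by its boundary data, rather than a purely graph-theoretic propagation. Concretely, I would argue as follows. If \(u\equiv 0\) on the connected component of interior vertices containing \(v_0\) together with its boundary neighbours, then the strict column-sum positivity in Proposition~\ref{prop:potential_convex_local} shows that the only way the equation at each interior vertex can hold is if the boundary vertices of the incident faces also satisfy \(u=0\). If a component of \(V^\circ\) is not connected to others except through boundary vertices, I would instead read the conclusion "\(k_i^1=k_i^2\) for all \(v_i\in V\)" on the connected component and note that, under the paper's standing connectivity assumptions, this is all of \(V\). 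This yields \(\mathcal P^1=\mathcal P^2\) by the uniqueness in Theorem~\ref{thm:existence_uniqueness_gcp}.
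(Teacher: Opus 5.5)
Your proof of $k_i^1\le k_i^2$ and your first rigidity step are exactly the paper's argument; the first rigidity step reruns the mean value identity at $v_0$ to get $u_i=0$ on every vertex of every face through $v_0$. The problem is the propagation step, and the repair in your last paragraph does not work.

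\begin{itemize}
\item Uniqueness in Theorem~\ref{thm:existence_uniqueness_gcp} determines a packing from $(\hat L,\hat k)$. It gives $\mathcal P^1=\mathcal P^2$ only once you know $\hat k^1=\hat k^2$ on all of $V^\partial$, which is precisely what is at stake.
\item Column-sum positivity at interior vertices gives nothing beyond the step you already did.
\item The paper has no ``standing connectivity assumption'' other than connectedness of $S_{g,n}$, hence of the $1$-skeleton. That is not what is needed. Your claim that every boundary vertex ``is already covered as a neighbour of some interior vertex'' is an unproved assumption, and it is false in general.
\end{itemize}

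What the argument actually proves is weaker. Let $G^\circ$ be the graph on $V^\circ$ whose edges join interior vertices lying on a common face. Then $u=0$ holds on every vertex of every face that meets the component of $v_0$ in $G^\circ$.

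This gap cannot be closed without an extra hypothesis. The paper's phrase ``using the connectedness of the $1$-skeleton'' hides the same gap. Here is a counterexample.

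\begin{itemize}
\item Take a disk with boundary vertices $b_1,\dots,b_m$ in cyclic order, with $m\ge 6$ and $4\le j\le m-2$.
\item Add the chords $b_1b_j$, $b_2b_j$, $b_jb_m$.
\item Cone an interior vertex $c'$ to the polygon $b_2\cdots b_j$, and an interior vertex $c''$ to the polygon $b_j\cdots b_m$.
\end{itemize}

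This is a finite polygonal cellular decomposition in the paper's sense: the graph is simple and every degree is at least $3$. The vertex $b_1$ lies only on the triangles $b_1b_2b_j$ and $b_1b_jb_m$, and neither contains an interior vertex. Hence no $L_v$ with $v\in V^\circ$ involves $k_{b_1}$.

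\begin{itemize}
\item Suppose $\hat k^1$ and $\hat k^2$ agree except that $\hat k^1_{b_1}<\hat k^2_{b_1}$. The interior equations coincide, so uniqueness gives equal interior curvatures. Equality then holds at every interior vertex, yet $k^1_{b_1}<k^2_{b_1}$.
\item Suppose instead the data differ only at $b_{j+1}$. Then $k^1_{c'}=k^2_{c'}$ but $k^1_{c''}<k^2_{c''}$. So disconnectedness of $G^\circ$ also breaks the claim.
\end{itemize}

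The rigidity statement needs an additional hypothesis, for example: $G^\circ$ is connected, and every boundary vertex lies on a face containing an interior vertex. Under that hypothesis your propagation, and the paper's, goes through. First, $u=0$ spreads over $V^\circ$ along $G^\circ$. Then it reaches each boundary vertex through a face shared with an interior vertex.
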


\begin{proof}
Set \(u_i=\ln\frac{k_i^1}{k_i^2}\) for \(v_i\in V\). Then the boundary assumption is equivalent to \(u_i\le 0\) for all \(v_i\in V^\partial\).

Suppose that there exists a vertex \(v_i\in V\) such that \(k_i^1>k_i^2\). Then \(\max_{v_i\in V}\frac{k_i^1}{k_i^2}>1\). By Lemma~\ref{lem:maximum}, this maximum cannot be attained at any interior vertex. Hence it must be attained at some boundary vertex \(v_i\in V^\partial\). However, on the boundary we have
\[
\frac{k_i^1}{k_i^2}=\frac{\hat k_i^1}{\hat k_i^2}\le 1,
\]
which is impossible. Therefore \(\frac{k_i^1}{k_i^2}\le 1\) for all \(v_i\in V\), that is, \(k_i^1\le k_i^2\) for all \(v_i\in V\).

We now prove the rigidity statement. Assume that there exists an interior vertex \(v_0\in V^\circ\) such that \(k_0^1=k_0^2\). Since we have already proved that \(k_i^1\le k_i^2\) for all \(v_i\in V\), it follows that \(u_i\le 0\) for all \(v_i\in V\), and \(u_0=0\). Thus \(u_0\) is the maximum of \(u\) on \(V\).

Applying the same computation as in the proof of Lemma~\ref{lem:maximum} at the vertex \(v_0\), we obtain
\[
0=
\sum_{P\in F_{\{v_0\}}}
\sum_{v_i\in V(P)\setminus\{v_0\}}
\frac{\partial L_{i,P}}{\partial s_0}\,u_i.
\]
For every \(P\in F_{\{v_0\}}\) and every \(v_i\in V(P)\setminus\{v_0\}\), we have \(\frac{\partial L_{i,P}}{\partial s_0}<0\) and \(u_i\le 0\). Hence each term \(\frac{\partial L_{i,P}}{\partial s_0}\,u_i\) is nonnegative. Since their total sum is zero, every term must vanish. Therefore \(u_i=0\) for every vertex \(v_i\) belonging to a face in \(F_{\{v_0\}}\).

Repeating this argument and using the connectedness of the \(1\)-skeleton, we conclude that \(u_i=0\) for all \(v_i\in V\). Hence \(k_i^1=k_i^2\) for all \(v_i\in V\).
\end{proof}

\begin{theorem}[Schwarz--Pick II]
\label{thm:schwarz_pick_II}
Let the generalized circle packings \(\mathcal P^1\) and \(\mathcal P^2\) be defined as above. Then \(l_{i,P}^1\ge l_{i,P}^2\) for every face \(P\in F\) and every vertex \(v_i\in V(P)\), where \(l_{i,P}^j\) denotes the length of the generalized circle arc centered at \(v_i\) in the face \(P\) for \(j=1,2\).

Moreover, if equality occurs at some generalized circle arc centered at an interior vertex \(v_i\in V^\circ\), that is, if \(l_{i,P}^1=l_{i,P}^2\) for some face \(P\in F\) with \(v_i\in V(P)\), then \(k_i^1=k_i^2\) for all \(v_i\in V\).
\end{theorem}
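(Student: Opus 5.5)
Within a fixed face \(P\), the arc length \(l_{i,P}\) is a function of the curvatures \(k_v\), \(v\in V(P)\), with \(\alpha_P\) fixed. The plan is to show it is monotone in each variable and then use Theorem~\ref{thm:schwarz_pick_I}. Since \(l_{i,P}=L_{i,P}/k_i\), in log-coordinates \(s_v=\ln k_v\) we have
\[
\frac{\partial l_{i,P}}{\partial s_i}=\frac{1}{k_i}\Big(\frac{\partial L_{i,P}}{\partial s_i}-L_{i,P}\Big),
\qquad
\frac{\partial l_{i,P}}{\partial s_j}=\frac{1}{k_i}\frac{\partial L_{i,P}}{\partial s_j}<0\quad (j\neq i,\ v_j\in V(P)).
\]
The off-diagonal sign comes from Proposition~\ref{prop:potential_convex_local}, via Lemma 2.3 of \cite{hu2025combinatorial}. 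For the diagonal term we need \(\partial l_{i,P}/\partial k_i<0\), i.e. \(\partial L_{i,P}/\partial s_i<L_{i,P}\). Geometrically this says that increasing the curvature of \(C_i\) (shrinking the circle) while keeping the neighbours and the cone angle fixed shortens the arc of \(C_i\) inside \(P\). I would get it from the explicit formulas of \cite{hu2025combinatorial} for the generalized conical quadrilateral (kite) formed by \(v_i\), the two tangency points, and the dual center. In the kite decomposition, \(l_{i,P}\) is a sum of contributions from the two kites adjacent to \(v_i\). The symmetry \(\partial l_{i,P}/\partial k_j=\partial l_{j,P}/\partial k_i\), which follows from the closedness of \(\omega_P\), should reduce this to a one-variable check in each kite.

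Granting the sign information, we have \(l_{i,P}\) decreasing in every \(k_v\), \(v\in V(P)\). Join the two curvature vectors by the segment \(s(t)=(1-t)s^2+ts^1\) on \(V(P)\). By Theorem~\ref{thm:schwarz_pick_I}, \(u_v=s^1_v-s^2_v\le 0\) for all \(v\). The mean value theorem then gives
\[
l^1_{i,P}-l^2_{i,P}=\sum_{v\in V(P)}\frac{\partial l_{i,P}}{\partial s_v}\Big|_{t=\xi}u_v\ \ge 0,
\]
since every term is a product of two nonpositive numbers. This proves the inequality.

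For rigidity, suppose \(v_i\in V^\circ\) and \(l^1_{i,P}=l^2_{i,P}\). Then every term in the sum vanishes. All partial derivatives are strictly negative, so \(u_v=0\) for every \(v\in V(P)\), in particular \(u_i=0\) with \(v_i\) interior. The rigidity part of Theorem~\ref{thm:schwarz_pick_I} then gives \(k^1=k^2\) on all of \(V\).

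The main obstacle is the diagonal sign \(\partial l_{i,P}/\partial k_i<0\), since the paper so far only records signs for \(L_{i,P}\). If the direct kite computation is messy, I would first try the uniform geometric argument: fix the tangency configuration with the neighbours and shrink \(C_i\). Alternatively, write \(l_{i,P}\) as a sum over the two adjacent kites and use the hyperbolic trigonometric relation between \(k_i\), the dual radius and the half-angle in each kite. This should be treated separately for the circle, horocycle and hypercycle cases, with continuity across \(k_i=1\).
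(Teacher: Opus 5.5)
Your overall structure matches the paper's: show that $l_{i,P}$ decreases in every vertex curvature of $P$, feed in $k^1\le k^2$ from Theorem~\ref{thm:schwarz_pick_I}, and for rigidity extract $k_i^1=k_i^2$ at an interior vertex and apply the rigidity part of Theorem~\ref{thm:schwarz_pick_I}. The mean value bookkeeping, the off-diagonal signs obtained from Proposition~\ref{prop:potential_convex_local}, and the rigidity deduction are all fine. The gap is the step you grant, $\frac{d l_{i,P}}{dk_i}<0$ with $\alpha_P$ and the other curvatures of $P$ fixed, i.e.\ $\partial L_{i,P}/\partial s_i<L_{i,P}$.

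Nothing in the paper gives this upper bound. Proposition~\ref{prop:potential_convex_local} only yields $\partial L_{i,P}/\partial s_i>0$ and positive column sums, which are lower bounds. Two of your suggested routes also do not produce it. Closedness of $\omega_P$ only gives symmetry of the off-diagonal entries and says nothing about the diagonal one. Shrinking $C_i$ ``with the tangency configuration fixed'' is not an admissible deformation: at fixed $\alpha_P$, changing $k_i$ moves the dual circle $C_P$, and hence the tangency points. For the same reason the kite check is not a one-variable check.

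The missing idea is the paper's factorization through the dual curvature $k_P$. In the (single) kite at $v_i$, $l_{i,P}$ depends only on $(k_i,k_P)$. For example, if $k_i>1$, the two congruent right triangles with right angle at a tangency point and legs $r_i,r_P$ give
$$l_{i,P}=2\arctan\bigl(\sqrt{k_i^2-1}/k_P\bigr)/\sqrt{k_i^2-1}.$$
This is strictly decreasing in both arguments, with analogous formulas for horocycles and hypercycles. Separately, the kite angles at the dual center must sum to $\alpha_P$, and this forces $k_P$ to be strictly increasing in each vertex curvature; that is the content of the lemma the paper cites. The chain rule then gives
\[
\frac{d l_{i,P}}{d k_i}=\frac{\partial l_{i,P}}{\partial k_i}+\frac{\partial l_{i,P}}{\partial k_P}\,\frac{\partial k_P}{\partial k_i}<0 .
\]
The second term is exactly what your geometric picture leaves out.

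The paper imports these facts (Lemma 2.6 of \cite{hu2025hyperbolic} and the proof of Theorem 1.6 of \cite{hu2025combinatorial}). Once you have them, neither the mean value theorem nor Proposition~\ref{prop:potential_convex_local} is needed. From $k_i^1\le k_i^2$ and $k_P^1\le k_P^2$ one gets directly
$$l_{i,P}(k_i^1,k_P^1)\ge l_{i,P}(k_i^2,k_P^1)\ge l_{i,P}(k_i^2,k_P^2),$$
and equality forces $k_i^1=k_i^2$ by strict monotonicity in the first argument.
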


\begin{proof}
By Theorem~\ref{thm:schwarz_pick_I}, we have \(k_i^1\le k_i^2\) for all \(v_i\in V\). Fix a face \(P\in F\). By the argument in the proof of Lemma 2.6 in \cite{hu2025hyperbolic}, the curvature \(k_P\) of the dual circle is increasing with respect to the vertex curvatures of \(P\). Hence \(k_P^1\le k_P^2\), where \(k_P^j\) is the geodesic curvature of the dual circle of \(P\) in \(\mathcal P^j\).

Now fix a vertex \(v_i\in V(P)\). The local arc length is a smooth function of \(k_i\) and \(k_P\), say \(l_{i,P}=l_{i,P}(k_i,k_P)\). By the proof of Theorem 1.6 in \cite{hu2025combinatorial},
\[
\frac{\partial l_{i,P}}{\partial k_i}<0,
\qquad
\frac{\partial l_{i,P}}{\partial k_P}<0.
\]
Thus \(l_{i,P}(k_i,k_P)\) is strictly decreasing in both variables, and so
\[
l_{i,P}^1=l_{i,P}(k_i^1,k_P^1)\ge l_{i,P}(k_i^2,k_P^2)=l_{i,P}^2.
\]

Assume now that \(l_{i,P}^1=l_{i,P}^2\) for some face \(P\in F\) and some interior vertex \(v_i\in V^\circ\cap V(P)\). Since \(l_{i,P}(k_i,k_P)\) is strictly decreasing in both variables, the equality together with \(k_i^1\le k_i^2\) and \(k_P^1\le k_P^2\) implies \(k_i^1=k_i^2\). As \(v_i\in V^\circ\), the rigidity part of Theorem~\ref{thm:schwarz_pick_I} yields \(k_i^1=k_i^2\) for all \(v_i\in V\).
\end{proof}

\begin{theorem}[Schwarz--Pick III]
\label{thm:schwarz_pick_III}
Let the generalized circle packings \(\mathcal P^1\) and \(\mathcal P^2\) be defined as above. Then
\[
\rho^1(v_i,v_l)\ge \rho^2(v_i,v_l)
\]
for any two vertices \(v_i,v_l\in \widetilde V\), where \(\widetilde V=\{v_i\in V\mid k_i^1>1\}\). Here \(\rho^1\) and \(\rho^2\) denote the distances defined by admissible arc chains associated with the polygonal cellular decomposition on the hyperbolic surfaces \(\widetilde S_{g,n}^1\) and \(\widetilde S_{g,n}^2\), respectively.

Moreover, if equality occurs for some \(v_i,v_l\in \widetilde V\), with at least one of them in \(V^\circ\), then \(k_i^1=k_i^2\) for all \(v_i\in V\).
\end{theorem}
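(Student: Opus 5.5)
We derive this from Theorems~\ref{thm:schwarz_pick_I} and~\ref{thm:schwarz_pick_II}, in the same way as the triangulated case in \cite{HLLY}. First we fix the notion of distance. For \(v_i,v_l\in\widetilde V\), an admissible arc chain from \(v_i\) to \(v_l\) is a sequence of vertices \(v_i=w_0,w_1,\dots,w_r=v_l\), consecutive ones sharing an edge. Its length is the sum of the geodesic segments joining consecutive centers. These segments pass through the tangency points of the generalized circles. When \(k_{w}>1\) the circle centered at \(w\) is a genuine circle with finite radius \(r_w=\operatorname{arccoth}k_w\), so such a segment has length \(r_{w_{a}}+r_{w_{a+1}}\). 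Hence \(\rho^j(v_i,v_l)\) is the infimum over chains of \(\sum_a (r^j_{w_a}+r^j_{w_{a+1}})\), with the convention that chains through vertices with \(k\le 1\) have infinite length (their radius is infinite or the center is ideal or a geodesic). Equivalently, only chains inside \(\widetilde V^{\,j}=\{k^j>1\}\) contribute.

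The key monotonicity comes from Theorem~\ref{thm:schwarz_pick_I}. There \(k_w^1\le k_w^2\) for every \(w\), so \(\widetilde V=\widetilde V^{\,1}\subset\widetilde V^{\,2}\). Since \(\operatorname{arccoth}\) is strictly decreasing on \((1,\infty)\), we get \(r_w^1\ge r_w^2\) for every \(w\in\widetilde V\). Take any chain admissible for \(\mathcal P^1\); all its vertices lie in \(\widetilde V\). It is then also admissible for \(\mathcal P^2\), and its \(\mathcal P^1\)-length is at least its \(\mathcal P^2\)-length, which is at least \(\rho^2(v_i,v_l)\). Taking the infimum over such chains gives \(\rho^1(v_i,v_l)\ge\rho^2(v_i,v_l)\). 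If no admissible chain exists for \(\mathcal P^1\), then \(\rho^1=+\infty\) and the inequality is trivial.

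For rigidity, suppose \(\rho^1(v_i,v_l)=\rho^2(v_i,v_l)<\infty\) with, say, \(v_i\in V^\circ\). Chains are finite and combinatorially finite in number if we restrict to simple chains; replacing a chain by a simple one only shortens it because lengths are positive. So \(\rho^1\) is attained by some chain \(\gamma\). Then
\[
\rho^2(v_i,v_l)\le \ell^2(\gamma)\le \ell^1(\gamma)=\rho^1(v_i,v_l)=\rho^2(v_i,v_l),
\]
so \(\ell^1(\gamma)=\ell^2(\gamma)\). This is a sum of termwise inequalities \(r_w^1\ge r_w^2\), so equality holds at every vertex of \(\gamma\), in particular \(r_{v_i}^1=r_{v_i}^2\). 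Hence \(k_{v_i}^1=k_{v_i}^2\) at the interior vertex \(v_i\), and the rigidity part of Theorem~\ref{thm:schwarz_pick_I} gives \(k^1=k^2\) on all of \(V\).

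The main obstacle is making the definition of \(\rho^j\) precise in the polygonal setting. Adjacent circles in a face are tangent only across edges, so we must check that the segment between the centers of adjacent circles has length \(r_{w}+r_{w'}\) in the glued conical surface. This holds because each tangency point lies on the edge, and the geodesics from the two centers meet it orthogonally to the common tangent, so they concatenate into a geodesic of that length. We must also check that the infimum is attained. If the paper's \(\rho^j\) instead includes the arc lengths \(l_{w,P}\) along the chain, we use Theorem~\ref{thm:schwarz_pick_II} termwise in exactly the same way, and the same equality argument applies.
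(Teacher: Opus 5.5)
Your argument is sound, but most of the proposal works with a different distance from the paper's \(\rho^j\).

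\textbf{The paper's route.} In the paper, \(\rho^j(v_i,v_l)=\min_{\gamma\in\mathcal C_{i,l}}\sum_{\alpha\in\gamma} l^j_\alpha\), where the \(\alpha\) are generalized circle arcs and \(l^j_\alpha\) are their lengths. The termwise inequality \(l^1_\alpha\ge l^2_\alpha\) comes from Theorem~\ref{thm:schwarz_pick_II}, and taking minima gives the comparison. For rigidity, the paper takes a \(\rho^1\)-minimizing chain \(\gamma^\ast\) and squeezes \(\rho^1=L^1(\gamma^\ast)\ge L^2(\gamma^\ast)\ge \rho^2\) into equalities. It then applies the rigidity part of Theorem~\ref{thm:schwarz_pick_II} to an arc of \(\gamma^\ast\) centered at an interior vertex. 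This is exactly your closing fallback sentence.

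\textbf{Your route.} Your main argument measures vertex chains by center-to-center edge lengths \(r_w+r_{w'}\). What it buys:
\begin{itemize}
\item It needs only Theorem~\ref{thm:schwarz_pick_I} and the monotonicity of \(\operatorname{arccoth}\). It does not use the dual-circle monotonicity or the signs of \(\partial l_{i,P}/\partial k_i\) and \(\partial l_{i,P}/\partial k_P\) that underlie Theorem~\ref{thm:schwarz_pick_II}.
\item It justifies that the infimum is attained, via simple chains, whereas the paper just writes \(\min\).
\item Its rigidity step is automatic, because \(r_{v_i}\) is a summand of every chain starting at \(v_i\).
\end{itemize}
However, a comparison for this edge-length distance does not by itself give the comparison for the arc-length distance. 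On its own, the main body proves a different inequality from the one stated.

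\textbf{The step hidden in your fallback.} The phrase ``the same equality argument applies'' hides the one step that does not transfer verbatim. In the arc-length version, nothing automatically forces an arc centered at the interior endpoint to be a summand. A chain could a priori leave \(C_{v_i}\) at a tangency point and run only along neighbouring circles, which may be centered at boundary vertices. So you need one of two things: build into ``admissible'' that the chain contains an arc of the endpoint circle, or give an argument that \(\gamma^\ast\) contains an arc centered at some interior vertex. The paper simply asserts this point, so if you rely on the fallback you should state the convention explicitly.
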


\begin{proof}
Let \(v_i,v_l\in \widetilde V\). By Theorem~\ref{thm:schwarz_pick_I}, we have
\[
k_i^1\le k_i^2,\qquad \forall\, v_i\in V.
\]
In particular, since \(k_i^1>1\) for every \(v_i\in \widetilde V\), it follows that
\[
k_i^2\ge k_i^1>1.
\]
Hence the vertices in \(\widetilde V\) correspond to genuine hyperbolic circles in both
\(\widetilde S_{g,n}^1\) and \(\widetilde S_{g,n}^2\).

Let \(\mathcal C_{i,l}\) be the set of all admissible arc chains connecting the boundary
components corresponding to \(v_i\) and \(v_l\). For each \(\gamma\in\mathcal C_{i,l}\), let
\[
L^j(\gamma)=\sum_{\alpha\in\gamma} l_\alpha^j,\qquad j=1,2,
\]
be the total length of \(\gamma\) in \(\mathcal P^j\). By Theorem~\ref{thm:schwarz_pick_II}, every generalized circle arc satisfies
\[
l_\alpha^1\ge l_\alpha^2.
\]
Therefore,
\[
L^1(\gamma)\ge L^2(\gamma),\qquad \forall\, \gamma\in\mathcal C_{i,l}.
\]

By Definition~\ref{def:generalized_circle_packing}, the distance \(\rho^j(v_i,v_l)\) is the minimum of \(L^j(\gamma)\) over all \(\gamma\in\mathcal C_{i,l}\). Hence
\[
\rho^1(v_i,v_l)
=
\min_{\gamma\in\mathcal C_{i,l}}L^1(\gamma)
\ge
\min_{\gamma\in\mathcal C_{i,l}}L^2(\gamma)
=
\rho^2(v_i,v_l).
\]
This proves the distance comparison.

We now prove the rigidity statement. Assume that for some \(v_i,v_l\in \widetilde V\), with at least one of them in \(V^\circ\),
\[
\rho^1(v_i,v_l)=\rho^2(v_i,v_l).
\]
Choose an admissible arc chain \(\gamma^\ast\in\mathcal C_{i,l}\) such that
\[
L^1(\gamma^\ast)=\rho^1(v_i,v_l).
\]
Then
\[
\rho^1(v_i,v_l)
=
L^1(\gamma^\ast)
\ge
L^2(\gamma^\ast)
\ge
\rho^2(v_i,v_l).
\]
Since \(\rho^1(v_i,v_l)=\rho^2(v_i,v_l)\), all the above inequalities are equalities. In particular,
\[
L^1(\gamma^\ast)=L^2(\gamma^\ast).
\]
Because each arc in \(\gamma^\ast\) satisfies \(l_\alpha^1\ge l_\alpha^2\), the equality of the total lengths implies
\[
l_\alpha^1=l_\alpha^2
\qquad \text{for every } \alpha\in\gamma^\ast.
\]

Since at least one of the endpoints belongs to \(V^\circ\), the arc chain \(\gamma^\ast\) contains a generalized circle arc centered at an interior vertex. Therefore, by the rigidity part of Theorem~\ref{thm:schwarz_pick_II}, we conclude that
\[
k_i^1=k_i^2,\qquad \forall\, v_i\in V.
\]
This completes the proof.
\end{proof}

\begin{theorem}[Schwarz--Pick IV]
\label{thm:schwarz_pick_IV}
Let the generalized circle packings \(\mathcal P^1\) and \(\mathcal P^2\) be defined as above. Then
\[
\mathrm{Area}^1(\Omega_P)\ge \mathrm{Area}^2(\Omega_P)
\]
for every face \(P\in F\), where \(\mathrm{Area}^j(\Omega_P)\) denotes the hyperbolic area of the ideal region corresponding to the face \(P\) in \(\mathcal P^j\), \(j=1,2\).

Moreover, if equality occurs at some face \(P\in F\) containing an interior vertex, then \(k_i^1=k_i^2\) for all \(v_i\in V\).
\end{theorem}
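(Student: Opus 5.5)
The comparison will come from Gauss--Bonnet applied to the region \(\Omega_P\) inside a single face, combined with the monotonicity of the local total geodesic curvatures.

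By the formula recorded in the proof of Proposition~\ref{prop:potential_convex_local},
\[
\mathrm{Area}^j(\Omega_P)=N(P)\pi-\alpha_P-\sum_{v_i\in V(P)}L_{i,P}^j,\qquad j=1,2.
\]
The conical angle \(\alpha_P=2\pi-Y_P\) is the same for both packings. So it suffices to show
\[
\sum_{v_i\in V(P)}L_{i,P}^1\le \sum_{v_i\in V(P)}L_{i,P}^2.
\]
View \(A_P:=\mathrm{Area}(\Omega_P)\) as a function of \((s_i)_{v_i\in V(P)}\), with \(s_i=\ln k_i\), treating every vertex of \(P\) as variable. The proof of Proposition~\ref{prop:potential_convex_local}, via Lemma 2.3 and the proof of Lemma 2.4 in \cite{hu2025combinatorial}, gives \(\partial A_P/\partial s_a<0\) for each vertex \(v_a\) of \(P\). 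Equivalently, \(\sum_{i}\partial L_{i,P}/\partial s_a>0\), which is the positive local column sum.

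Next I join the two packings by the path \(s_i(t)=(1-t)\ln k_i^2+t\ln k_i^1\), \(t\in[0,1]\), for \(v_i\in V(P)\). By Theorem~\ref{thm:schwarz_pick_I}, \(u_i:=\ln k_i^1-\ln k_i^2\le 0\) for every \(v_i\). The mean value theorem then gives some \(\xi\in(0,1)\) with
\[
A_P^1-A_P^2=\sum_{v_a\in V(P)}\frac{\partial A_P}{\partial s_a}\Big|_{t=\xi}\,u_a .
\]
Each summand is a product of two nonpositive numbers, so it is nonnegative. Hence \(\mathrm{Area}^1(\Omega_P)\ge\mathrm{Area}^2(\Omega_P)\).

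For rigidity, suppose equality holds for a face \(P\) that contains an interior vertex \(v_0\). Then every summand above vanishes. Since \(\partial A_P/\partial s_0<0\) strictly at \(t=\xi\), this forces \(u_0=0\), that is, \(k_0^1=k_0^2\). The rigidity part of Theorem~\ref{thm:schwarz_pick_I} then gives \(k_i^1=k_i^2\) for all \(v_i\in V\).

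The main obstacle is justifying the strict inequality \(\partial A_P/\partial s_a<0\) along the whole segment, and in the full-variable setting rather than only for interior variables. I plan to take it from the cited lemmas in \cite{hu2025combinatorial}, which hold at every point of \(\mathbb R^{N(P)}\), since the local geometry depends only on the curvatures \(k_i\) and \(Y_P\).
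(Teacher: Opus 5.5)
Your argument is correct and is essentially the paper's proof. Both rest on the strict monotonicity of $\mathrm{Area}(\Omega_P)$ in each vertex curvature; the paper quotes $\partial\,\mathrm{Area}(\Omega_P)/\partial k_i<0$ from Lemma 2.6 of \cite{hu2025hyperbolic}, which is equivalent to your $\partial A_P/\partial s_a<0$ since $\partial/\partial s_a=k_a\,\partial/\partial k_a$. Both then combine this with Theorem~\ref{thm:schwarz_pick_I}, for the ordering $k^1\le k^2$ and for the final rigidity step; your mean-value formulation just makes explicit the paper's ``strictly decreasing in each variable'' step, and your opening Gauss--Bonnet reduction to $\sum_i L_{i,P}^1\le\sum_i L_{i,P}^2$ is not actually needed.
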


\begin{proof}
By Theorem~\ref{thm:schwarz_pick_I}, we have \(k_i^1\le k_i^2\) for all \(v_i\in V\). Fix a face \(P\in F\). By Lemma 2.6 in \cite{hu2025hyperbolic}, the area \(\mathrm{Area}(\Omega_P)\) is a smooth function of the vertex curvatures \(\{k_i\}_{v_i\in V(P)}\), and
\[
\frac{\partial \mathrm{Area}(\Omega_P)}{\partial k_i}<0
\qquad \text{for every } v_i\in V(P).
\]
Hence \(\mathrm{Area}(\Omega_P)\) is strictly decreasing in each vertex curvature, and therefore \(\mathrm{Area}^1(\Omega_P)\ge \mathrm{Area}^2(\Omega_P)\) for every \(P\in F\).

Assume now that \(\mathrm{Area}^1(\Omega_P)=\mathrm{Area}^2(\Omega_P)\) for some face \(P\in F\) containing an interior vertex. Since \(\mathrm{Area}(\Omega_P)\) is strictly decreasing in each variable and \(k_i^1\le k_i^2\) for all \(v_i\in V(P)\), the equality of areas implies \(k_i^1=k_i^2\) for all \(v_i\in V(P)\). In particular, there exists an interior vertex \(v_0\in V^\circ\cap V(P)\) such that \(k_0^1=k_0^2\). The rigidity part of Theorem~\ref{thm:schwarz_pick_I} now gives \(k_i^1=k_i^2\) for all \(v_i\in V\).
\end{proof}

\section{Acknowledgments}
Yu Sun is supported by NSF of China, No.12501097 and University level natural science foundation of Nanjing Institute
of Technology, No.3534113223051. Zunwu He is supported by NSFC, No.12301094 and Guangzhou Basic and Applied Basic Research Foundation, No.2024A04J3483, and the General Program of Guangdong Basic and Applied Basic Research Foundation, No.2025A1515010502.

	\bibliographystyle{plain}
	\bibliography{cite1} 
	\Addresses

\end{document}